\newcommand*{\mailto}[1]{\href{mailto:#1}{\nolinkurl{#1}}}
\numberwithin{equation}{section}
\newtheorem{theorem}{Theorem}[section]
\newtheorem{definition}[theorem]{Definition}
\newtheorem{lemma}[theorem]{Lemma}
\newtheorem{proposition}[theorem]{Proposition}
\numberwithin{equation}{section}
\theoremstyle{definition}
\begin{document}

\title[Self-adjoint extensions with compact resolvent]
{Self-adjoint extensions with compact resolvent}

\author[Yicao Wang]{Yicao Wang}
\address
{School of Mathematics, Hohai University, Nanjing 210098, China}

\baselineskip= 20pt
\begin{abstract}Let $T$ be a densely defined closed symmetric operator with equal deficiency indices in a separable complex Hilbert space $H$. In this paper, we prove that $T$ has a self-adjoint extension with compact resolvent if and only if the domain $D(T)$ of $T$ is compactly embedded in $H$ w.r.t. the graph norm on $D(T)$. If it is the case, we also prove that all self-adjoint extensions with compact resolvent can be parameterized by unitary operators $U$ on a certain Hilbert space such that $U-Id$ is compact.
\end{abstract}
\maketitle


\tableofcontents

\section{Introduction}
Von Neumann's theory of self-adjoint extensions of symmetric operators is now a classical theory in functional analysis. Given a densely defined closed symmetric operator $T$ in a separable Hilbert space $H$, according to von Neumann, if the deficiency indices of $T$ are $(n_+,n_-)$ where $n_\pm\in \mathbb{N}\cup \{\infty\}$, then there is a self-adjoint extension if and only if $n_+=n_-$. Once a self-adjoint extension of $T$ does exist, there are infinitely many--all of them are parameterized by unitary maps from the positive deficiency subspace onto the negative deficiency subspace.

In this paper we always assume the deficiency indices of $T$ satisfy $n_+=n_-=n$. Thus self-adjoint extensions exist. We say a self-adjoint extension $\tilde{T}$ of $T$ has compact resolvent if $(\tilde{T}-\mathfrak{i})^{-1}$ is compact\footnote{If $(\tilde{T}-\mathfrak{i})^{-1}$ is compact, so is $(\tilde{T}-\lambda)^{-1}$ for any $\lambda\in \rho(\tilde{T})$, where $\rho(\tilde{T})$ is the resolvent set of $\tilde{T}$.}. The property of having compact resolvent means $\tilde{T}$ is regular in a certain sense, e.g., $\tilde{T}$ has only purely discrete spectrum. Many classical symmetric operators have self-adjoint extensions with compact resolvent, e.g., regular 1-dimensional Schrodinger operators on a finite interval and the Laplacian $\triangle$ on a bounded smooth domain in $\mathbb{R}^m$. However, on the abstract level two natural and basic questions remain: (1) among those infinitely many self-adjoint extensions, is there one with compact resolvent? and (2) if it is the case, how many such extensions are there?

As for the first question, it is easy to see that for a self-adjoint extension with compact resolvent to exist the domain $D(T)$ (equipped with the graph norm induced from $T$) of $T$ has to be \emph{compactly embedded} into $H$ (see Lemma \ref{nece}). Then is this necessary condition also sufficient? For a finite $n$, this is really the case, but for $n=\infty$, as far as we know a complete answer may not yet exist. The best result we can find is Theorem 5 on Page 224 of \cite{birman1986spectral}, implying that if the form domain of the quadratic form associated to the Friedrichs extension $T_F$ of a semi-bounded symmetric operator is compactly embedded in $H$, then $T_F$ has compact resolvent. Some authors still take the existence of self-adjoint extensions with compact resolvent as a basic assumption of their theorems, e.g., \cite[Thm.~5.1]{booss1998maslov}, \cite[Thm.~4.3]{behrndt2022construction}. Our first main theorem is to provide the following simple criterion for the existence of self-adjoint extensions with compact resolvent in terms of $T$ itself only:
\begin{theorem}\label{Th1}For a densely defined closed symmetric operator $T$ in $H$ with equal deficiency indices, $T$ has a self-adjoint extension with compact resolvent if and only if $D(T)$ equipped with its graph norm is compactly embedded in $H$.
\end{theorem}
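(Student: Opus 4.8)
The nontrivial direction is sufficiency; necessity is Lemma~\ref{nece}. The plan is to realize all self-adjoint extensions through von Neumann's theory in its Cayley-transform form, and to translate the compact-resolvent requirement into a compactness statement about the parametrizing unitary. Write $N_\pm=\ker(T^*\mp\mathfrak i)$, so that $N_+^\perp=\operatorname{Ran}(T+\mathfrak i)$ and $N_-^\perp=\operatorname{Ran}(T-\mathfrak i)$ are closed (closedness follows from $\|(T\pm\mathfrak i)u\|^2=\|Tu\|^2+\|u\|^2$ together with closedness of $T$). Let $V=(T-\mathfrak i)(T+\mathfrak i)^{-1}\colon N_+^\perp\to N_-^\perp$ be the Cayley transform, a unitary between these subspaces; the self-adjoint extensions of $T$ are exactly the operators $\tilde T=\mathfrak i(1+\tilde V)(1-\tilde V)^{-1}$ with $\tilde V=V\oplus B$, where $B\colon N_+\to N_-$ ranges over all unitaries. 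The first step is the elementary identity $(\tilde T+\mathfrak i)^{-1}=\tfrac1{2\mathfrak i}(1-\tilde V)$, which shows that $\tilde T$ has compact resolvent if and only if $1-\tilde V$ is compact on $H$.

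Next I would extract the consequences of the compact embedding. Since $(T+\mathfrak i)^{-1}$ is an isometry of $N_+^\perp$ onto $(D(T),\|\cdot\|_T)$ (again by the displayed norm identity), composing with the compact embedding $D(T)\hookrightarrow H$ shows that $(T+\mathfrak i)^{-1}$, and likewise $(T-\mathfrak i)^{-1}$, is compact on $N_+^\perp$, resp.\ $N_-^\perp$. A direct computation gives $(1-V)|_{N_+^\perp}=2\mathfrak i\,(T+\mathfrak i)^{-1}$, so $1-V$ is compact, and by passing to adjoints the same holds for $1-V^*$. From this one deduces that $P_{N_-^\perp}|_{N_+}$ and $P_{N_+^\perp}|_{N_-}$ are compact ($P$ denoting orthogonal projection). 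Consequently $W:=P_{N_-}|_{N_+}=1-P_{N_-^\perp}|_{N_+}\colon N_+\to N_-$ is a compact perturbation of the identity, hence Fredholm, and $1-\tilde V$ is compact if and only if $1-B$ is, which in turn is equivalent to $W-B$ being compact.

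The heart of the matter, and the step I expect to be the main obstacle when $n=\infty$, is to produce a unitary $B\colon N_+\to N_-$ with $W-B$ compact; by the theory of compact perturbations this is possible precisely when $\operatorname{ind}W=0$. To show the index vanishes I would assemble the single operator $\mathcal V:=V\,P_{N_+^\perp}+W\,P_{N_+}$ on $H$. The estimates above show $\mathcal V=1+(\text{compact})$, so $\operatorname{ind}\mathcal V=0$; on the other hand, with respect to the decompositions $H=N_+\oplus N_+^\perp$ in the source and $H=N_-\oplus N_-^\perp$ in the target, $\mathcal V$ is block diagonal with blocks $W$ and the unitary $V$, whence $\operatorname{ind}\mathcal V=\operatorname{ind}W+\operatorname{ind}V=\operatorname{ind}W$. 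Therefore $\operatorname{ind}W=0$.

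With the index obstruction removed, a polar-decomposition argument finishes the construction: writing $W=B_0|W|$ with $|W|=(W^*W)^{1/2}=1-(\text{compact})$ and $\operatorname{ind}B_0=\operatorname{ind}W=0$, I extend the partial isometry $B_0$ across the finite-dimensional defect spaces to a unitary $B\colon N_+\to N_-$ that differs from $W$, hence from $B_0$, by a compact operator. Setting $\tilde V=V\oplus B$ then yields a self-adjoint extension $\tilde T$ for which $1-\tilde V=(1-V)\oplus(1-B)$ is compact, so $(\tilde T+\mathfrak i)^{-1}$ is compact and $\tilde T$ has compact resolvent. This establishes the sufficiency direction and completes the proof. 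The same analysis simultaneously identifies the parametrizing unitaries yielding compact resolvent as exactly those $B$ with $1-B$ compact, anticipating the second main theorem.
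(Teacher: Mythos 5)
Your proof is correct, and for the decisive step it takes a genuinely different and more elementary route than the paper. Both arguments share the same skeleton: necessity is Lemma~\ref{nece}, and sufficiency reduces to showing (i) that $N_-=\textup{ker}(T^*+\mathfrak{i})$ is a compact perturbation of $N_+=\textup{ker}(T^*-\mathfrak{i})$ --- which you and the paper both extract from the identity $(1-V)|_{\textup{Ran}(T+\mathfrak{i})}=2\mathfrak{i}(T+\mathfrak{i})^{-1}$ together with the compact embedding --- and (ii) that the associated relative index vanishes, after which a polar-decomposition argument across the finite-dimensional defect spaces produces the required unitary $B$ with $W-B$ compact. The divergence is in (ii). The paper first reduces to a simple operator with infinite deficiency indices, proves regularity of $T$ (Prop.~\ref{l2}), constructs a boundary triplet whose distinguished extension has $0$ in its resolvent set (Prop.~\ref{trans}, Lemma~\ref{reg}), identifies the relative dimension with $-\textup{ind}(M'(\lambda))$ via the Weyl function, and kills the index by evaluating near the real point $0$ where $M'(0)$ is self-adjoint and invertible. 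You instead kill the index in one stroke: $\mathcal{V}=VP_{N_+^\perp}+WP_{N_+}$ is a compact perturbation of the identity, hence Fredholm of index zero, and it is block diagonal as $W\oplus V$ relative to $H=N_+\oplus_\bot N_+^\perp$ in the source and $H=N_-\oplus_\bot N_-^\perp$ in the target, so $\textup{ind}(W)=\textup{ind}(\mathcal{V})-\textup{ind}(V)=0$. I have checked the two block estimates (with $U_0$ the partial-isometry extension of $V$, one has $U_0-P_{N_+^\perp}=-2\mathfrak{i}(T+\mathfrak{i})^{-1}P_{N_+^\perp}$ compact and $P_{N_-^\perp}P_{N_+}=U_0(U_0-P_{N_+^\perp})^*P_{N_+}$ compact) and the additivity of the index over the orthogonal block decomposition; the argument is sound. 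Your route avoids boundary triplets, Weyl functions, the regularity proposition and the reduction to the simple case entirely, and treats finite and infinite deficiency indices uniformly; what it does not deliver is the auxiliary apparatus (the $\gamma$-field, the formula $n(\lambda)=-\textup{ind}(M'(\lambda))$, the Krein-type resolvent formula) that the paper reuses for Theorem~\ref{thm2}, though your closing observation that compact resolvent corresponds exactly to $1-B$ compact is precisely the von Neumann-picture analogue of that theorem. Two points deserve to be made explicit in a write-up: the compactness of $P_{N_-^\perp}|_{N_+}$ requires the short computation with $U_0$ above (it is the content of the paper's separate compact-perturbation lemma, not an immediate restatement of the compactness of $1-V$), and the Fredholmness of $W$ is most cleanly read off from the block decomposition of $\mathcal{V}$ rather than from calling $W$ a compact perturbation of the identity, since $W$ maps between two different subspaces and the inclusion $N_+\hookrightarrow H$ is not itself Fredholm.
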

The necessity part is easy and seems to be well-known. The difficulty lies in the sufficiency part. The proof given in \S~\ref{existence} depends on some topological observations concerning the Cayley transform and a formula concerning Weyl function associated to a boundary triplet.

Recall that a boundary triplet for $T$ is a triple $(G, \Gamma_0, \Gamma_1)$ consisting of a Hilbert space $G$ and two linear maps $\Gamma_0, \Gamma_1: D(T^*)\rightarrow G$ such that\\
(1) The map $(\Gamma_0, \Gamma_1):D(T^*)\rightarrow G\times G$ is surjective;\\
(2) For any $x, y\in D(T^*)$, the following abstract Green's formula holds:
\[(T^*x, y)_H-(x,T^*y)_H=(\Gamma_1x, \Gamma_0y)_G-(\Gamma_0 x, \Gamma_1y)_G.\]
For a symmetric operator $T$ with equal deficiency indices, a boundary triplet always exists. In particular, the extensions $T^*|_{\Gamma_0^{-1}(0)}$ and $T^*|_{\Gamma_1^{-1}(0)}$ are self-adjoint. With the aid of a boundary triplet, there is a one-to-one correspondence between self-adjoint extensions of $T$ and unitary operators on $G$. More precisely, the domain of the self-adjoint extension associated to the unitary operator $U$ is
\begin{equation}\{x\in D(T^*)|(Id-U)\Gamma_1x=\mathfrak{i}(Id+U)\Gamma_0x\},\label{sau}\end{equation}
where $Id$ is the identity operator on $G$. That's to say the set of self-adjoint extensions is parameterized by the unitary group $\mathbb{U}(G)$ on $G$. This is an easy conclusion of \cite[Prop.~14.4]{schmudgen2012unbounded} and more in the spirit of von Neumann's original vision. However, in the literature there do exist other schemes trying to parameterize self-adjoint extensions, e.g., using Lagrangian subspaces, self-adjoint linear relations and even unbounded operators \cite{schmudgen2012unbounded, behrndt2020boundary, grubb1968characterization}. In our viewpoint, as for the answer to the second question, unitary operators are more suitable parameters. Here comes our second main theorem.
\begin{theorem}\label{thm2}If $(G, \Gamma_0, \Gamma_1)$ is a boundary triplet for $T$ such that the self-adjoint extension $T^*|_{\Gamma_0^{-1}(0)}$ has compact resolvent, then via (\ref{sau}) all self-adjoint extensions with compact resolvent are parameterized by unitary operators $U\in \mathbb{U}(G)$ such that $U-Id$ is compact.
\end{theorem}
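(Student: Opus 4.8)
The plan is to relate the compactness of the resolvent of the self-adjoint extension $T_U$ associated to a unitary $U$ (via (\ref{sau})) to the compactness of $U-Id$, using the reference extension $A_0:=T^*|_{\Gamma_0^{-1}(0)}$, which has compact resolvent by hypothesis. The natural machinery here is the Krein-type resolvent formula from boundary triplet theory, which expresses the difference $(T_U-\lambda)^{-1}-(A_0-\lambda)^{-1}$ in terms of the $\gamma$-field and the Weyl function $M(\lambda)$ associated to $(G,\Gamma_0,\Gamma_1)$. Fixing a regular point, say $\lambda=\mathfrak{i}$, the standard formula reads schematically
\[(T_U-\mathfrak{i})^{-1}-(A_0-\mathfrak{i})^{-1}=\gamma(\mathfrak{i})\bigl((U-Id)-\mathfrak{i}(U+Id)M(\mathfrak{i})\bigr)^{-1}(U-Id)\,\gamma(\bar{\mathfrak{i}})^*,\]
where $\gamma(\mathfrak{i}):G\to H$ is bounded and $\gamma(\bar{\mathfrak{i}})^*:H\to G$ is its bounded counterpart. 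Since $A_0$ already has compact resolvent, $T_U$ has compact resolvent if and only if the difference on the left-hand side is compact.

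First I would make precise the objects involved: define the $\gamma$-field $\gamma(\lambda)=(\Gamma_0|_{\ker(T^*-\lambda)})^{-1}$ and the Weyl function $M(\lambda)=\Gamma_1\gamma(\lambda)$, recalling their basic boundedness and analyticity properties from boundary triplet theory. The key preliminary observation is that because $A_0=T^*|_{\Gamma_0^{-1}(0)}$ has compact resolvent, the $\gamma$-field $\gamma(\mathfrak{i})$ is itself a compact operator from $G$ into $H$; this follows because $\gamma(\mathfrak{i})$ maps into $\ker(T^*-\mathfrak{i})$ and factors through the compact embedding $D(A_0)\hookrightarrow H$ in a suitable way (equivalently, compactness of $(A_0-\mathfrak{i})^{-1}$ transfers to the solution operator $\gamma$). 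Establishing this compactness of $\gamma(\mathfrak{i})$ cleanly is the technical heart of the argument and I expect it to be the main obstacle, since one must argue carefully that the graph-norm-bounded image of the deficiency space sits compactly in $H$.

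Granting that $\gamma(\mathfrak{i})$ and $\gamma(\bar{\mathfrak{i}})^*$ are compact, the sufficiency direction becomes transparent: if $U-Id$ is compact, then the right-hand side above is a product involving the compact factor $U-Id$ (and bounded factors), hence compact, so $(T_U-\mathfrak{i})^{-1}$ differs from the compact $(A_0-\mathfrak{i})^{-1}$ by a compact operator and is therefore compact. For the necessity direction I would invert the logic: assuming $(T_U-\mathfrak{i})^{-1}$ is compact, the left-hand difference is compact, and I must extract compactness of $U-Id$ from the formula. Here the plan is to solve the resolvent formula for $U-Id$ by composing on the left and right with the (bounded, with bounded inverse on the appropriate range) maps built from $\gamma(\mathfrak{i})$ and the bracketed operator; the point is that the factors other than $U-Id$ on the right-hand side are boundedly invertible between suitable spaces, so compactness of the whole expression forces compactness of $U-Id$.

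The cleanest way to handle both directions uniformly is to reduce to the Cayley picture: since the map $U\mapsto T_U$ and the graph-norm topology interact naturally with the Cayley transform of $A_0$, one can transport the problem to a statement about the bounded unitary $V=(A_0-\mathfrak{i})(A_0+\mathfrak{i})^{-1}$ and the perturbed unitaries associated to each $T_U$, where compactness of a resolvent difference is equivalent to compactness of the corresponding difference of Cayley transforms. I expect the bookkeeping of which operators are boundedly invertible — in particular verifying that $\bigl((U-Id)-\mathfrak{i}(U+Id)M(\mathfrak{i})\bigr)^{-1}$ exists and is bounded for every $U\in\mathbb{U}(G)$ (which is exactly the statement that $T_U$ is a well-defined self-adjoint operator with $\mathfrak{i}\in\rho(T_U)$) — to be the routine but essential glue, while the genuinely delicate step remains the compactness of the $\gamma$-field established in the previous paragraph.
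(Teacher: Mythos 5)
There is a genuine error at what you yourself call ``the technical heart of the argument'': the $\gamma$-field $\gamma(\mathfrak{i})$ is \emph{never} compact in the only nontrivial case. By definition $\gamma(\mathfrak{i})=(\Gamma_0|_{\ker(T^*-\mathfrak{i})})^{-1}$ is a topological linear isomorphism from $G$ onto the closed subspace $\ker(T^*-\mathfrak{i})$ of $H$ (on $\ker(T^*-\mathfrak{i})$ the graph norm is equivalent to the $H$-norm, so $\gamma(\mathfrak{i})$ is bounded below); a bounded-below operator with infinite-dimensional range cannot be compact, and when the deficiency indices are finite the theorem is vacuous anyway since every $U-Id$ is then compact. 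The heuristic ``compactness of $(A_0-\mathfrak{i})^{-1}$ transfers to $\gamma$'' does not work because $\gamma(\mathfrak{i})$ maps into $\ker(T^*-\mathfrak{i})$, which is a complement of $D(A_0)$ inside $D(T^*)$, not something factoring through $(A_0-\mathfrak{i})^{-1}$. Worse, if $\gamma(\mathfrak{i})$ and $\gamma(-\mathfrak{i})^*$ were compact, the Krein formula would make the resolvent difference compact for \emph{every} unitary $U$, so every self-adjoint extension would have compact resolvent --- contradicting the theorem you are trying to prove (and the paper's final proposition, which exhibits extensions near $T^*|_{\Gamma_1^{-1}(0)}$ without compact resolvent).

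The good news is that your argument does not actually need this false lemma, and once it is deleted your strategy is essentially the paper's. In both directions the compactness must be carried entirely by the explicit factor $U-Id$ in the middle of the Krein formula: if $U-Id$ is compact the right-hand side is compact because one factor is; conversely, if the resolvent difference is compact you sandwich with $\gamma(\mathfrak{i})^*$ on the left and $\gamma(-\mathfrak{i})$ on the right, use that $\gamma(\mathfrak{i})^*\gamma(\mathfrak{i})$ and $\gamma(-\mathfrak{i})^*\gamma(-\mathfrak{i})$ are invertible on $G$ (precisely because $\gamma$ is an isomorphism onto the deficiency space --- the opposite of compact), and peel off the boundedly invertible middle factors. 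The remaining issue you flag as ``routine glue'' --- bounded invertibility of the bracketed operator for every unitary $U$, including those for which $Id-U$ is not injective and the associated boundary parameter is a genuine relation --- is exactly what the paper sidesteps by rotating the boundary triplet to $\Gamma_\pm=\tfrac{1}{\sqrt2}(\Gamma_1\pm\mathfrak{i}\Gamma_0)$, so that the formula becomes $(T_U-\lambda)^{-1}-(\mathrm{T}_+-\lambda)^{-1}=\mathfrak{i}\gamma_+(\lambda)(U-B(\lambda))^{-1}\gamma_-(\bar\lambda)^*$ with $\|B(\lambda)\|<1$, making $U-B(\lambda)$ manifestly invertible for every unitary $U$ and producing the factor $Id-U$ cleanly upon subtracting the $U=Id$ case. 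I recommend you adopt that normalization, or else supply a genuine proof of the invertibility of your bracketed operator for arbitrary $U\in\mathbb{U}(G)$.
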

The advantage of this parameterization is clear: when the deficiency indices are $(\infty, \infty)$, such unitary operators come together to constitute a closed normal subgroup $\mathbb{U}_\infty(G)$ of $\mathbb{U}(G)$. Note that due to the polar decomposition, $\mathbb{U}_\infty(G)$ is a deformation retract of the larger group $\mathbb{GL}_c(G)$ consisting of invertible operators that are compact perturbations of $Id$. According to \cite{palais1965homotopy}, $\mathbb{U}_\infty(G)$ has the same homotopy type as the stable general linear group $\mathbb{GL}(\infty)$. That's to say our Thm.~\ref{thm2} reflects more clearly the nontrivial global structure of the set of self-adjoint extensions with compact resolvent.

To prove Thm.~\ref{thm2} in \S~\ref{para}, we have to establish a Krein type resolvent formula (Prop.~\ref{res}) and a variant of the boundary triplet $(G, \Gamma_0, \Gamma_1)$ should be used to achieve this. All relevant preliminary lemmas were actually contained in \cite{wang2024complex}, but for the reader's convenience we have included a self-contained presentation.

Here are our conventions on notation. An inner product $(\cdot, \cdot)$ on a Hilbert space $H$ is linear in the first variable and conjugate-linear in the second. The induced norm will be written as $\|\cdot\|$. If necessary, the inner product is often denoted by $(\cdot, \cdot)_H$ to emphasize the underlying Hilbert space and to distinguish it from an ordered pair. We use $\oplus$ (resp.~$\oplus_\bot$) to denote a topological (resp.~orthogonal) direct sum. For a subspace $V\subseteq H$, its orthogonal complement is $V^\bot$. The zero vector space will be just denoted by $0$. The identity operator on $H$ will be just denoted by $Id$ if the underlying space is clear from the context. For a constant $c$, $c\times Id$ is often written simply as $c$ if no confusion arises. $D(A)$ is the domain of an operator $A$, and $\textup{ker}A$ (resp. $\textup{Ran}A$) is the kernel (resp. range) of $A$. $A\subseteq B$ means $D(A)\subseteq D(B)$ and $B|_{D(A)}=A$. The (point, continuous, or residual) spectrum of an operator $A$ is denoted by $\sigma(A)$ ($\sigma_p(A)$, $\sigma_c(A)$, or $\sigma_r(A)$) and the resolvent set of $A$ by $\rho(A)$. The set of bounded operators from $H_1$ to $H_2$ is denoted by $\mathbb{B}(H_1,H_2)$ and if $H_1=H_2=H$, $\mathbb{B}(H)$ will be used.

\section{Preliminaries and some lemmas}
This section is devoted to laying the background of the paper and establishing some lemmas to reduce the investigation to the essential case.

Let $H$ be an infinite-dimensional complex separable Hilbert space. Recall that a linear operator $A$ defined in $H$ is closed if the graph of $A$ is a closed subspace of $H\oplus_\bot H$. A densely defined closed symmetric operator $T$ in $H$ is a linear operator from a linear subspace $D(T)\subset H$ to $H$ such that $\overline{D(T)}=H$ and $(Tx,y)=(x,Ty)$ for any $x,y\in D(T)$.

In this paper, when referring to a symmetric operator $T$, we always mean a densely defined closed one. The domain $D(T^*)$ of the adjoint $T^*$ consists of $y\in H$ such that
\[(Tx,y)=(x,z),\quad \forall x\in D(T)\]
for some (unique) $z\in H$, and $T^*y=z$. $T^*$ is always closed and $T\subseteq T^*$. On $D(T^*)$ there is the useful graph inner product
\[(x,y)_g=(x,y)_H+(T^*x,T^*y)_H,\quad \forall x,y\in D(T^*).\]
The induced norm $\|\cdot\|_g$ is called the \emph{graph norm}. $D(T^*)$ with the graph inner product is a Hilbert space and $D(T)$ is then a closed subspace. If furthermore $T=T^*$, we say $T$ is self-adjoint. If $T\subseteq \tilde{T} \subseteq T^*$, $\tilde{T}$ is called an extension of $T$; in particular if $\tilde{T}=\tilde{T}^*$, we say $\tilde{T}$ is a self-adjoint extension. A fact we shall frequently use is that for $\lambda\in \mathbb{C}$ with $\Im \lambda\neq 0$, we have
\[\|(T-\lambda)x\|_H\geq |\Im \lambda|\times \|x\|_H, \quad \forall x\in D(T).\]
As a consequence, $\textup{Ran}(T-\lambda)$ is closed in $H$ and $T-\lambda$ is a topological linear isomorphism between $(D(T), \|\cdot\|_g)$ and $\textup{Ran}(T-\lambda)$.

The number $n_+:=\dim \textup{ker} (T^*-\mathfrak{i})$ (resp.~$n_-:=\dim \textup{ker} (T^*+\mathfrak{i})$) is called the positive (resp.~negative) deficiency index. We have the well-known von Neumann decomposition (w.r.t. the graph inner product).
\[D(T^*)=D(T)\oplus_\bot \textup{ker} (T^*-\mathfrak{i})\oplus_\bot \textup{ker} (T^*+\mathfrak{i}).\]
Self-adjoint extensions exist only for the case $n_+=n_-=n$. We shall assume this is the case throughout the rest of the paper. Let us recall von Neumann's approach to constructing all self-adjoint extensions. The Cayley transform $C(T):=(T-\mathfrak{i})(T+\mathfrak{i})^{-1}$ of $T$ is a \emph{unitary} map from $\textup{Ran}(T+\mathfrak{i})$ to $\textup{Ran}(T-\mathfrak{i})$. Since $(\textup{Ran}(T\pm \mathfrak{i}))^\bot=\textup{ker}(T^*\mp \mathfrak{i})$, one can take a unitary map $V$ from $\textup{ker}(T^*-\mathfrak{i})$ to $\textup{ker}(T^*+ \mathfrak{i})$ and obtain a unitary operator $U_V:=C(T)\oplus_\bot V$ on $H$. The inverse Cayley transform of $U_V$ is a self-adjoint extension of the original $T$ and all self-adjoint extensions of $T$ arise in this manner. For more technical details, see for example \cite[Chap.~13]{rudin1991functional}.

Now the necessity part of Thm.~\ref{Th1} is known and quite easy.
\begin{lemma}\label{nece}If $T$ has a self-adjoint extension with compact resolvent, then $(D(T),\|\cdot\|_g)$ is compactly embedded in $H$.\end{lemma}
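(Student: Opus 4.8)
The plan is to factor the natural inclusion of $(D(T),\|\cdot\|_g)$ into $H$ through the compact resolvent of the given self-adjoint extension, using only the elementary fact that the composition of a bounded operator with a compact one is compact.

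First I would fix a self-adjoint extension $\tilde{T}$ of $T$ with compact resolvent, so that $(\tilde{T}-\mathfrak{i})^{-1}\in\mathbb{B}(H)$ is compact and maps $H$ bijectively onto $D(\tilde{T})$. The key preliminary observation is that the single graph norm $\|\cdot\|_g$ on $D(T^*)$ behaves well under passing to intermediate extensions: since $T\subseteq\tilde{T}\subseteq T^*$, for $x\in D(\tilde{T})$ we have $T^*x=\tilde{T}x$, whence $\|x\|_g^2=\|x\|_H^2+\|\tilde{T}x\|_H^2$, and for $x\in D(T)$ we have $T^*x=Tx$, whence $\|x\|_g^2=\|x\|_H^2+\|Tx\|_H^2$. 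Thus $(D(T),\|\cdot\|_g)$ is (isometrically) a subspace of $(D(\tilde{T}),\|\cdot\|_g)$, and it suffices to prove the compact embedding for $\tilde{T}$, since restricting a compact inclusion to a subspace preserves compactness.

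The main step is therefore to show that $(D(\tilde{T}),\|\cdot\|_g)$ is compactly embedded in $H$. Because $\tilde{T}$ is self-adjoint and $\Im\mathfrak{i}=1\neq0$, the operator $\tilde{T}-\mathfrak{i}$ is a bijection of $D(\tilde{T})$ onto $H$; moreover, from $\|(\tilde{T}-\mathfrak{i})x\|_H\leq\|\tilde{T}x\|_H+\|x\|_H\leq\sqrt{2}\,\|x\|_g$ it is bounded as a map $(D(\tilde{T}),\|\cdot\|_g)\to H$. The inclusion $\iota\colon(D(\tilde{T}),\|\cdot\|_g)\hookrightarrow H$ then factors as
\[\iota=(\tilde{T}-\mathfrak{i})^{-1}\circ(\tilde{T}-\mathfrak{i}),\]
a bounded map followed by the compact operator $(\tilde{T}-\mathfrak{i})^{-1}$; hence $\iota$ is compact. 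Combined with the previous paragraph this yields the compact embedding $(D(T),\|\cdot\|_g)\hookrightarrow H$ and completes the argument.

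I do not expect any serious obstacle here: the only points requiring care are the verification that $\|\cdot\|_g$ restricts to the respective operator graph norms on $D(T)$ and $D(\tilde{T})$, and the boundedness of $\tilde{T}-\mathfrak{i}$ with respect to the graph norm, both of which are routine. The entire conceptual content is simply the factorization of the inclusion through the compact resolvent.
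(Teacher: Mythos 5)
Your proof is correct and rests on the same idea as the paper's: the inclusion of the graph-normed domain into $H$ factors as the bounded map $\tilde{T}\pm\mathfrak{i}$ followed by the compact resolvent (the paper writes this as $x_n=(\tilde{T}+\mathfrak{i})^{-1}(T+\mathfrak{i})x_n$ for a graph-norm-bounded sequence in $D(T)$, noting that $T+\mathfrak{i}$ is in fact an isometry from $(D(T),\|\cdot\|_g)$ onto its range). Your detour through $D(\tilde{T})$ and your use of $\tilde{T}-\mathfrak{i}$ rather than $\tilde{T}+\mathfrak{i}$ are only cosmetic differences.
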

\begin{proof}Let $\tilde{T}$ be a self-adjoint extension of $T$ such that $(\tilde{T}-\mathfrak{i})^{-1}$ is compact and let $V$ be the unitary map associated to this extension in von Neumann's construction. Then
\[U_V=(\tilde{T}-\mathfrak{i})(\tilde{T}+\mathfrak{i})^{-1}=Id-2\mathfrak{i}(\tilde{T}+\mathfrak{i})^{-1}.\]
 Let $\{x_n\}\subseteq D(T)$ be bounded w.r.t. the graph norm, i.e., there is a positive constant $C$ such that
 $\|x_n\|^2_H+\|T^*x_n\|^2_H< C$. This is equivalent to that $\{(T+\mathfrak{i})x_n\}$ is bounded in $H$, because
 \begin{eqnarray*}
 ((T+\mathfrak{i})x_n, (T+\mathfrak{i})x_n)_H&=&(x_n,x_n)_H+(Tx_n,Tx_n)_H+\mathfrak{i}[(x,Tx)-(Tx,x)]\\
 &=&\|x_n\|_H^2+\|T^*x\|_H^2.
 \end{eqnarray*} Thus the sequence
\[\{(\tilde{T}+\mathfrak{i})^{-1}(T+\mathfrak{i})x_n=(\tilde{T}+\mathfrak{i})^{-1}(\tilde{T}+\mathfrak{i})x_n=x_n\}\]
has a convergent subsequence in $H$. The conclusion then follows.
\end{proof}

The following lemma is elementary and we include it here only for completeness.
\begin{lemma}\label{l1}Let $T$ be a symmetric operator with deficiency indices $(n,n)$ such that $n\in \mathbb{N}$. If $(D(T),\|\cdot\|_g)$ is compactly embedded in $H$, then any self-adjoint extension of $T$ has compact resolvent.
\end{lemma}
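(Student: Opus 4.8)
The plan is to show directly that the inclusion $\iota:(D(\tilde{T}),\|\cdot\|_g)\hookrightarrow H$ is compact for any self-adjoint extension $\tilde{T}$, and then convert this into compactness of the resolvent. For the conversion I would reuse the observation already recorded in the excerpt for $T$: since $\tilde{T}=\tilde{T}^*$ we have $\mathfrak{i}\in\rho(\tilde{T})$, and $\tilde{T}-\mathfrak{i}$ is a topological isomorphism from $(D(\tilde{T}),\|\cdot\|_g)$ onto $H$. Hence, as a map $H\to H$, the resolvent factors as the bounded isomorphism $(\tilde{T}-\mathfrak{i})^{-1}:H\to(D(\tilde{T}),\|\cdot\|_g)$ followed by $\iota$; because this isomorphism and its inverse $\tilde{T}-\mathfrak{i}$ are both bounded, $(\tilde{T}-\mathfrak{i})^{-1}$ is compact on $H$ if and only if $\iota$ is compact. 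Note that on $D(\tilde{T})\subseteq D(T^*)$ the graph norm of $\tilde{T}$ is just the restriction of the graph norm of $T^*$, since $\tilde{T}=T^*|_{D(\tilde{T})}$.

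The key structural point, and the only place the finiteness of $n$ enters, is that $D(\tilde{T})$ is a finite-dimensional graph-orthogonal enlargement of $D(T)$. Indeed, von Neumann's decomposition $D(T^*)=D(T)\oplus_\bot\ker(T^*-\mathfrak{i})\oplus_\bot\ker(T^*+\mathfrak{i})$ together with the description of $\tilde{T}$ through a unitary $V:\ker(T^*-\mathfrak{i})\to\ker(T^*+\mathfrak{i})$ gives $D(\tilde{T})=D(T)\oplus_\bot W$, where $W=\{f+Vf:f\in\ker(T^*-\mathfrak{i})\}$ is $\|\cdot\|_g$-orthogonal to $D(T)$ and has dimension $n<\infty$.

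With this decomposition the compactness of $\iota$ is immediate. Given a sequence $\{y_k\}\subseteq D(\tilde{T})$ bounded in the graph norm, I would split it graph-orthogonally as $y_k=x_k+w_k$ with $x_k\in D(T)$ and $w_k\in W$; orthogonality yields $\|x_k\|_g\le\|y_k\|_g$ and $\|w_k\|_g\le\|y_k\|_g$, so both pieces are bounded. The hypothesis that $(D(T),\|\cdot\|_g)$ is compactly embedded in $H$ furnishes a subsequence along which $\{x_k\}$ converges in $H$; since $W$ is finite-dimensional, all norms on it are equivalent, and passing to a further subsequence makes $\{w_k\}$ converge in $H$ as well. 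Hence $\{y_k\}$ admits a subsequence convergent in $H$, proving $\iota$ compact and therefore that $\tilde{T}$ has compact resolvent. There is essentially no serious obstacle here: the entire content is that appending a finite-dimensional (hence automatically compactly embedded) summand cannot destroy compactness of an embedding. This is precisely why the argument does not extend verbatim to $n=\infty$, and the genuinely hard sufficiency is left to Theorem~\ref{Th1}.
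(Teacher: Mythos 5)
Your proof is correct and rests on exactly the same two ingredients as the paper's: the compact embedding of $(D(T),\|\cdot\|_g)$ disposes of the infinite-dimensional piece, and finite-dimensionality of the deficiency spaces disposes of the rest. The only organizational difference is that you split the domain, $D(\tilde{T})=D(T)\oplus_\bot W$ via the von Neumann decomposition, and then factor the resolvent through the embedding $\iota$, whereas the paper splits the input vector according to $H=\textup{Ran}(T-\mathfrak{i})\oplus_\bot\textup{ker}(T^*+\mathfrak{i})$ and applies the resolvent to each summand --- two decompositions that correspond to one another under the isomorphism $\tilde{T}-\mathfrak{i}$.
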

\begin{proof}Let $\tilde{T}$ be any self-adjoint extension of $T$. Let $\{x_n\}$ be a bounded sequence in $H$ and $x_n=x_n^1+x_n^2$ according to the decomposition $H=\textup{Ran}(T-\mathfrak{i})\oplus_\bot \textup{ker}(T^*+\mathfrak{i})$. Then both $\{x_n^1\}$ and $\{x_n^2\}$ are bounded and
\[(\tilde{T}-\mathfrak{i})^{-1}x_n=(\tilde{T}-\mathfrak{i})^{-1}x_n^1+(\tilde{T}-\mathfrak{i})^{-1}x_n^2=(T-\mathfrak{i})^{-1}x_n^1+(\tilde{T}-\mathfrak{i})^{-1}x_n^2.\]
Since $(T-\mathfrak{i})^{-1}$ is a topological linear isomorphism between $\textup{Ran}(T-\mathfrak{i})$ and $(D(T), \|\cdot\|_g)$ and the latter is compactly embedded in $H$, we see $\{(T-\mathfrak{i})^{-1}x_n^1\}$ has a convergent subsequence in $H$. W.l.g, we still denote such a subsequence by $\{(T-\mathfrak{i})^{-1}x_n^1\}$. On the other side, since the corresponding $\{(\tilde{T}-\mathfrak{i})^{-1}x_n^2\}$ is bounded in $\textup{ker}(T^*-\mathfrak{i})$ which is of dimension $n$, there is a subsequence $\{x_{n_k}\}$ such that $\{(\tilde{T}-\mathfrak{i})^{-1}x_{n_k}^2\}$ is convergent in $H$. Therefore, $\{(\tilde{T}-\mathfrak{i})^{-1}x_{n_k}\}$ is convergent in $H$, i.e., $\tilde{T}$ has compact resolvent.
\end{proof}
The proof clearly depends on the fact that $n$ is finite and thus when $n=\infty$ much effort is needed.
\begin{definition}A symmetric operator $T$ in a Hilbert space $H$ is called simple, if it is not a nontrivial orthogonal sum of a self-adjoint operator and a symmetric operator.
\end{definition}
Given a symmetric operator $T$ with deficiency indices $(n,n)$, we can decompose $T$ into the orthogonal direct sum of its self-adjoint part $T'$ and its simple symmetric part $T''$ whose deficiency indices are $(n,n)$. In particular, $D(T)=D(T')+D(T'')$ and $H=\overline{D(T')}\oplus_\bot \overline{D(T'')}$.
\begin{lemma}Let $T$, $T'$ and $T''$ be as above. Then $T$ has a self-adjoint extension with compact resolvent if and only if $T'$ has compact resolvent and $T''$ has a self-adjoint extension with compact resolvent.
\end{lemma}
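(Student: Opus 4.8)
The plan is to reduce everything to the elementary fact that a block-diagonal bounded operator on an orthogonal direct sum is compact exactly when each of its diagonal blocks is compact, once the self-adjoint extensions of $T$ have been identified with those of $T''$. Throughout write $H=H'\oplus_\bot H''$ with $H'=\overline{D(T')}$ and $H''=\overline{D(T'')}$.

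First I would pin down the structure of the self-adjoint extensions of $T=T'\oplus_\bot T''$. Because $T'$ is self-adjoint on $H'$ its deficiency indices vanish, so
\[\textup{ker}(T^*\mp\mathfrak{i})=\textup{ker}(T'\mp\mathfrak{i})\oplus_\bot\textup{ker}((T'')^*\mp\mathfrak{i})=0\oplus_\bot\textup{ker}((T'')^*\mp\mathfrak{i}),\]
which shows that both deficiency subspaces of $T$ lie entirely inside $H''$. In von Neumann's construction recalled above one has $C(T)=C(T')\oplus_\bot C(T'')$ with $C(T')$ already unitary on all of $H'$, while every unitary $V\colon\textup{ker}(T^*-\mathfrak{i})\to\textup{ker}(T^*+\mathfrak{i})$ acts within $H''$. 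Hence $U_V=C(T')\oplus_\bot\big(C(T'')\oplus_\bot V\big)$ is block-diagonal with respect to $H'\oplus_\bot H''$, and passing to the inverse Cayley transform shows that every self-adjoint extension of $T$ has the form $\tilde T=T'\oplus_\bot\tilde T''$, where $\tilde T''$ is the self-adjoint extension of $T''$ determined by $V$; conversely each such direct sum is a self-adjoint extension of $T$. This yields a bijection between the self-adjoint extensions of $T$ and those of $T''$.

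Under this identification the resolvent splits as $(\tilde T-\mathfrak{i})^{-1}=(T'-\mathfrak{i})^{-1}\oplus_\bot(\tilde T''-\mathfrak{i})^{-1}$ on $H'\oplus_\bot H''$. It then remains to invoke the observation that a block-diagonal operator $A\oplus_\bot B$ is compact if and only if both $A$ and $B$ are compact (compactness of the sum restricts to each summand by testing on sequences supported in a single summand, and a finite orthogonal sum of compact operators is compact). Applying this to the resolvent gives both directions at once: $\tilde T=T'\oplus_\bot\tilde T''$ has compact resolvent if and only if $(T'-\mathfrak{i})^{-1}$ and $(\tilde T''-\mathfrak{i})^{-1}$ are both compact, i.e.\ iff $T'$ has compact resolvent and $\tilde T''$ is a compact-resolvent self-adjoint extension of $T''$. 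Quantifying over the extensions yields the claimed equivalence.

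The only substantive point---and the step I would treat most carefully---is the block-diagonal form of the self-adjoint extensions, namely that no self-adjoint extension of $T$ couples the two summands. This is precisely where the hypothesis that $T'$ is self-adjoint (deficiency $(0,0)$) enters: it forces the entire deficiency data of $T$ into $H''$, so von Neumann's unitary parameter $V$ cannot mix $H'$ with $H''$. Everything else is routine.
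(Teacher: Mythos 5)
Your proof is correct and follows essentially the same route as the paper, which likewise reduces the lemma to the observation that the Cayley transform of any self-adjoint extension of $T$ splits as the orthogonal direct sum of $C(T')$ and the Cayley transform of a self-adjoint extension of $T''$. You simply spell out the details (vanishing deficiency indices of $T'$, block-diagonality of $U_V$, and compactness of a block-diagonal resolvent) that the paper leaves as ``obvious.''
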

\begin{proof}This will be obvious once one realizes that the Cayley transform of any self-adjoint extension $\tilde{T}$ of $T$ is actually the orthogonal direct sum of the Cayley transform of $T'$ and the Cayley transform of a self-adjoint extension of $T''$.
\end{proof}

If $(D(T), \|\cdot\|_g)$ is compactly embedded in $H$, then $D(T')$ as a closed subspace of $(D(T), \|\cdot\|_g)$ is compactly embedded in $\overline{D(T')}$ as well. By an argument similar to the proof of Lemma \ref{l1}, we see that $T'$ has compact resolvent. Therefore, to prove the sufficiency part of Theorem 1.1, we only need to consider the case where $T$ is simple. Furthermore by Lemma \ref{l1}, it suffices to consider the case where $T$ has deficiency indices $(\infty, \infty)$.
\section{Existence of self-adjoint extensions with compact resolvent}\label{existence}
We assume throughout this section that our symmetric operator $T$ in $H$ is simple, has deficiency indices $(\infty,\infty)$, and $(D(T), \|\cdot\|_g)$ is compactly embedded in $H$.

Recall that $\lambda\in \mathbb{C}$ is \emph{of regular type} for a symmetric operator $T$ if there is a constant $c_\lambda>0$ such that
\begin{equation}\|(T-\lambda)x\|\geq c_\lambda \|x\|\label{eq2}\end{equation}
for any $x\in D(T)$. Denote the set of $\lambda$ of regular type for $T$ by $\Theta(T)$. $\Theta(T)$ is open in $\mathbb{C}$ and basically the upper and lower half planes $\mathbb{C}_+\cup \mathbb{C}_-\subseteq \Theta(T)$. If furthermore $\Theta(T)=\mathbb{C}$, $T$ is called \emph{regular}. Regular operators are necessarily simple.

\begin{proposition}\label{l2}If $(D(T), \|\cdot\|_g)$ is compactly embedded in $H$, then $T$ is regular.
\end{proposition}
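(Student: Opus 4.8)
The plan is to prove $\Theta(T)=\mathbb{C}$. Since it is already noted that $\mathbb{C}_+\cup\mathbb{C}_-\subseteq\Theta(T)$, only the real points need attention, and I would show that every $\lambda\in\mathbb{R}$ is of regular type by contradiction. Suppose some $\lambda\in\mathbb{R}$ is \emph{not} of regular type. Then inequality (\ref{eq2}) holds for no positive constant, so taking $c=1/n$ yields a sequence $\{x_n\}\subseteq D(T)$ with $\|x_n\|_H=1$ and $\|(T-\lambda)x_n\|_H\to 0$.

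The first thing I would observe is that this sequence is automatically bounded in the graph norm. Indeed, since $x_n\in D(T)$ we have $T^*x_n=Tx_n$, and $\|Tx_n\|_H\leq\|(T-\lambda)x_n\|_H+|\lambda|\,\|x_n\|_H$ stays bounded; hence $\|x_n\|_g^2=\|x_n\|_H^2+\|Tx_n\|_H^2$ is bounded. This is exactly the hypothesis needed to invoke the compact embedding of $(D(T),\|\cdot\|_g)$ into $H$: the sequence admits a subsequence $\{x_{n_k}\}$ converging in $H$ to some $x$, with $\|x\|_H=1$ by continuity of the norm. On the other hand $Tx_{n_k}=(T-\lambda)x_{n_k}+\lambda x_{n_k}\to\lambda x$ in $H$, so since $T$ is closed we may pass to the limit in the graph to conclude $x\in D(T)$ and $Tx=\lambda x$. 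Thus the failure of regular type at a real point, upgraded by compactness, manufactures a genuine eigenvector of $T$ with real eigenvalue.

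The heart of the argument, and the step where I expect the actual content to sit, is to rule this out using the standing simplicity assumption of this section, via the claim that a simple symmetric operator has no real eigenvalue. Given $Tx=\lambda x$ with $\lambda$ real and $\|x\|_H=1$, set $M=\mathbb{C}x$ and let $P_M$ be the orthogonal projection onto $M$. A short computation using the symmetry of $T$ and the reality of $\lambda$ shows $P_M D(T)\subseteq D(T)$ and $TP_M y=P_M Ty$ for all $y\in D(T)$, so $M$ reduces $T$ and $T$ decomposes as the orthogonal sum of $T|_M$ --- multiplication by $\lambda$, hence self-adjoint --- and a symmetric operator on $M^\bot$. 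Because $M\neq 0$, this is a nontrivial decomposition with a nonzero self-adjoint summand, contradicting simplicity. This contradiction forces every real $\lambda$ to be of regular type, and together with $\mathbb{C}_+\cup\mathbb{C}_-\subseteq\Theta(T)$ it gives $\Theta(T)=\mathbb{C}$, i.e.\ $T$ is regular.
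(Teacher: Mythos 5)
Your proof is correct, but it takes a genuinely different route from the paper's. The paper works at $\lambda=0$ (replacing $T$ by $T-\lambda$ for the other real points) and argues through the polar decomposition $T=U_T|T|$: the compact embedding makes $(Id+|T|)^{-1}$ compact, so $\sigma(|T|)$ is discrete; simplicity forces $\textup{ker}|T|=\textup{ker}T=0$, hence $0\in\rho(|T|)$, $\textup{Ran}\,T$ is closed, and the Closed Graph Theorem delivers the regular-type estimate. You instead run a direct sequential-compactness argument: a normalized almost-null sequence for $T-\lambda$ is graph-bounded, the compact embedding extracts a limit $x$ with $\|x\|_H=1$, closedness of $T$ upgrades it to an eigenvector with real eigenvalue, and the eigenspace $M=\mathbb{C}x$ reduces $T$ into a nonzero self-adjoint summand plus a symmetric remainder, contradicting simplicity. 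That reduction step is sound: $P_MD(T)\subseteq D(T)$ because $x\in D(T)$, and $P_MTy=(y,Tx)x=\lambda(y,x)x=TP_My$ by symmetry of $T$ and reality of $\lambda$, while the restriction to $M^\bot$ is densely defined, closed and symmetric. Your version is more elementary --- no polar decomposition or spectral theorem for $|T|$ --- and it isolates exactly where simplicity enters: a real point can fail to be of regular type only by producing a genuine eigenvector. The paper's version yields a bit more structural information along the way (discreteness of $\sigma(|T|)$), in keeping with the compact-resolvent theme of the rest of the paper. Note that both arguments use the simplicity of $T$, which is a standing hypothesis of the section rather than part of the proposition's statement; you were right to flag this explicitly.
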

\begin{proof}We have to prove that $\mathbb{R}\subseteq \Theta(T)$. We shall only prove $0\in \Theta(T)$ and for other $\lambda\in \mathbb{R}$ it suffices to replace $T$ with $T-\lambda$.

$0\in \Theta(T)$ will follow immediately from Closed Graph Theorem if we can show $\textup{Ran}T$ is closed in $H$. Let $T=U_T|T|$ be the polar decomposition of $T$. See \cite[\S.~3.2, \S~7.1]{schmudgen2012unbounded} for the basics of this decomposition for unbounded operators. We note that (1) $|T|=(T^*T)^{1/2}$ is a densely defined self-adjoint operator whose domain $D(|T|)=D(T)$ and in particular $\|Tx\|=\||T|x\|$ for each $x\in D(T)$,  and (2) $U_T$ is a partial isometry whose initial (resp.~final) space is $\overline{\textup{Ran}|T|}$ (resp.~$\overline{\textup{Ran}T}$). Since $U_T$ transforms $\textup{Ran}|T|$ into $\textup{Ran}T$, $\textup{Ran}T$ is closed if and only if $\textup{Ran}|T|$ is closed.

Obviously, $\textup{ker}|T|=\textup{ker}T$. Since $T$ is simple, $\textup{ker}|T|=0$. The number $0$ lies either in $\sigma_c(|T|)$ or in $\rho(|T|)$. On the other side, since $|T|$ is positive-definite, $Id+|T|$ is invertible and consequently $(Id+|T|)^{-1}$ is a bounded operator from $H$ to $D(|T|)$. $(Id+|T|)^{-1}$ has to be a compact operator on $H$ because $D(|T|)=D(T)\hookrightarrow H$ is compact w.r.t. the graph norm. Let $\{\mu_i\}$ be the eigenvalues of $(Id+|T|)^{-1}$. Then $\sigma(|T|)=\{\frac{1}{\mu_i}-1\}$ and therefore $0\in \rho(|T|)$. Hence $\textup{Ran}|T|=H$. The proof then follows.
\end{proof}

We need the notion of boundary triplet to carry out a further analysis of $T$ and refer the reader to \cite{behrndt2020boundary,schmudgen2012unbounded} for a detailed presentation for it. Associated to a boundary triplet $(G, \Gamma_0, \Gamma_1)$ of $T$ is the important concept of Weyl function. Denote the self-adjoint extension $T^*|_{\Gamma_0^{-1}(0)}$ by $\mathrm{T}_0$. Then the $\gamma$-field $\gamma(\lambda)$ associated to the boundary triplet is a $\mathbb{B}(G, H)$-valued analytic function defined on $\rho(\mathrm{T}_0)$: For any $\varphi\in G$, $\gamma(\lambda)\varphi$ is the unique solution $x$ of the abstract boundary value problem
\[\left\{
\begin{array}{ll}
T^*x=\lambda x,\\
\Gamma_0x=\varphi.
\end{array}
\right.\]
Then the Weyl function $M(\lambda)$ as a $\mathbb{B}(G)$-valued analytic function is defined to be $M(\lambda)\varphi:=\Gamma_1\gamma(\lambda)\varphi$. $M(\lambda)$ is an operator-valued Nevanlinna function, i.e., for $\lambda \in \mathbb{C}\setminus \mathbb{R}$, $M(\bar{\lambda})=M(\lambda)^*$ and $\Im M(\lambda)/\Im \lambda>0$. A basic formula we shall use later is
\begin{equation}M'(\lambda)=\frac{d}{d\lambda}M(\lambda)=\gamma(\bar{\lambda})^*\gamma(\lambda),\label{equation}\end{equation}
which can be found in \cite[Prop.~14.15 (iv)]{schmudgen2012unbounded}.

A geometric interpretation of $M(\lambda)$ goes as follows. The quotient Hilbert space $\mathrm{B}_T:=D(T^*)/D(T)$ has an indefinite Hermitian inner product defined by
\[\{[x],[y]\}=-\mathfrak{i}[(T^*x,y)_H-(x,T^*y)_H].\]
The fact that $T$ has deficiency indices $(\infty, \infty)$ can be rephrased as that this indefinite Hermitian inner product has signature $(\infty, \infty)$. Similarly, on $\mathbb{G}:=G\oplus_\bot G$, we also have the canonical indefinite Hermitian inner product
\[\{(\varphi_0, \varphi_1), (\psi_0, \psi_1)\}_\mathbb{G}=-\mathfrak{i}[(\varphi_1, \psi_0)_G-(\varphi_0, \psi_1)_G]\]
where $(\varphi_0, \varphi_1)$, $(\psi_0, \psi_1)\in \mathbb{G}$. This indefinite Hermitian inner product also has signature $(\infty, \infty)$. Note that $\Gamma_0^{-1}(0)\cap \Gamma_1^{-1}(0)=D(T)$. Then that $(G, \Gamma_0, \Gamma_1)$ is a boundary triplet means precisely that $(\Gamma_0, \Gamma_1)$ is essentially an isometry from $(\mathrm{B}_T, \{\cdot, \cdot\})$ to $(\mathbb{G}, \{\cdot, \cdot\}_\mathbb{G})$. Denote the image of $\textup{ker}(T^*-\lambda)$ in $\mathrm{B}_T$ by $\mathrm{W}_\lambda$. The isometry transforms $\mathrm{W}_\lambda$ into a closed subspace $\mathrm{M}_\lambda$ of $\mathbb{G}$. In particular, if $\lambda\in \rho(\mathrm{T}_0)$, $\mathrm{M}_\lambda$ can be represented in terms of the operator $M(\lambda)$, i.e.,
\[\mathrm{M}_\lambda=\{(\varphi, M(\lambda)\varphi)\in \mathbb{G}|\varphi\in G\}.\]
One should note that unlike $M(\lambda)$, $\mathrm{M}_\lambda$ makes sense for all $\lambda\in \mathbb{C}$, though for $\lambda\in \sigma(\mathrm{T}_0)$ the above representation no longer holds.

A closed subspace $W$ of $\mathrm{B}_T$ or $\mathbb{G}$ is called \emph{isotropic} if the indefinite Hermitian inner product is trivial when restricted on $W$. $W$ is called \emph{Lagrangian} if it is isotropic and cannot be contained properly in a larger isotropic subspace. All self-adjoint extensions of $T$ are parameterized precisely by Lagrangians in $\mathrm{B}_T$ or $\mathbb{G}$, e.g., in $\mathrm{B}_T$ the Lagrangian corresponding to $\mathrm{T}_0$ is $D(\mathrm{T}_0)/D(T)$ and via the above isometry the corresponding Lagrangian in $\mathbb{G}$ is $L_0:=0\oplus G$. More generally any closed extension $\tilde{T}$ corresponds in this way to a closed subspace $W$ in $\mathrm{B}_T$ or $\mathbb{G}$ and $\tilde{T}^*$ corresponds to the orthogonal complement of $W$ w.r.t. the indefinite Hermitian inner product. See \cite[Lemma 14.6, Prop.~14.7]{schmudgen2012unbounded} for details of these claims.
\begin{proposition}\label{trans}For each $\lambda\in \mathbb{R}$, $\mathrm{M}_\lambda$ is Lagrangian in $\mathbb{G}$. $\lambda\in \rho(\mathrm{T}_0)$ if and only if $L_0$ and $\mathrm{M}_\lambda$ are transversal, or equivalently, (1) $L_0\cap \mathrm{M}_\lambda=0$, and (2) $L_0\oplus \mathrm{M}_\lambda=\mathbb{G}$.
\end{proposition}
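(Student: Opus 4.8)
The plan is to transport the first assertion to the operator side. Since $(\Gamma_0,\Gamma_1)$ descends to an isometry of indefinite inner product spaces carrying $\mathrm{W}_\lambda$ onto $\mathrm{M}_\lambda$, and being Lagrangian is preserved by such an isometry, it suffices to show $\mathrm{W}_\lambda\subseteq\mathrm{B}_T$ is Lagrangian for $\lambda\in\mathbb{R}$. First I would note $\mathrm{W}_\lambda$ is isotropic by the one-line computation that for $v,w\in\textup{ker}(T^*-\lambda)$, $\{[v],[w]\}=-\mathfrak{i}(\lambda-\bar{\lambda})(v,w)_H=0$. By the correspondence between closed extensions and closed subspaces of $\mathrm{B}_T$ (with $\tilde{T}^*\leftrightarrow W^\bot$), $\mathrm{W}_\lambda$ is Lagrangian precisely when $S_\lambda:=T^*|_{D(T)\oplus\textup{ker}(T^*-\lambda)}$ is self-adjoint, where the sum is a direct sum inside $(D(T^*),\|\cdot\|_g)$. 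This $S_\lambda$ is symmetric by the isotropy just observed. To compute its deficiency I would use that $S_\lambda^*$ corresponds to $\mathrm{W}_\lambda^\bot$ together with the regularity of $T$ (Prop.~\ref{l2}), which makes $\textup{Ran}(T-\lambda)$ closed with $(\textup{ker}(T^*-\lambda))^\bot=\textup{Ran}(T-\lambda)$; a short calculation then identifies $\textup{ker}(S_\lambda^*\mp\mathfrak{i})=\textup{ker}(T^*\mp\mathfrak{i})\cap\textup{Ran}(T-\lambda)$. The delicate point is that these intersections vanish: writing $y=(T-\lambda)x$ in such an intersection, symmetry of $T$ forces $(x,y)_H=(Tx,x)_H-\lambda\|x\|_H^2\in\mathbb{R}$, while (say in the $\textup{ker}(T^*-\mathfrak{i})$ case) $\|y\|_H^2=-(\mathfrak{i}+\lambda)(x,y)_H$; comparing imaginary parts gives $(x,y)_H=0$, hence $y=0$, and the other case is analogous. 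Thus $S_\lambda$ has deficiency indices $(0,0)$, is self-adjoint, and $\mathrm{M}_\lambda$ is Lagrangian.

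For the second assertion I would use the explicit descriptions $L_0=0\oplus G$ and $\mathrm{M}_\lambda=\{(\Gamma_0 v,\Gamma_1 v)\mid v\in\textup{ker}(T^*-\lambda)\}$. Condition (1) says that $\Gamma_0 v=0$ with $v\in\textup{ker}(T^*-\lambda)$ forces $(\Gamma_0 v,\Gamma_1 v)=0$; since $T$ is regular we have $\textup{ker}(T^*-\lambda)\cap D(T)=0$, so an element of $\mathrm{M}_\lambda$ vanishes iff the underlying $v$ does, and (1) is exactly injectivity of $\Gamma_0|_{\textup{ker}(T^*-\lambda)}$. Condition (2) holds because $L_0+\mathrm{M}_\lambda=\Gamma_0(\textup{ker}(T^*-\lambda))\oplus G$, so $L_0\oplus\mathrm{M}_\lambda=\mathbb{G}$ is exactly surjectivity of $\Gamma_0|_{\textup{ker}(T^*-\lambda)}$. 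Hence transversality of $L_0$ and $\mathrm{M}_\lambda$ is equivalent to $\Gamma_0$ restricting to a bijection $\textup{ker}(T^*-\lambda)\to G$, and it remains to match this with $\lambda\in\rho(\mathrm{T}_0)$.

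For the direction $\lambda\in\rho(\mathrm{T}_0)\Rightarrow$ bijectivity, I would invoke the decomposition $D(T^*)=D(\mathrm{T}_0)\oplus\textup{ker}(T^*-\lambda)$: given $u$, set $u_0=(\mathrm{T}_0-\lambda)^{-1}(T^*-\lambda)u$ so that $u-u_0\in\textup{ker}(T^*-\lambda)$, with uniqueness from injectivity of $\mathrm{T}_0-\lambda$. Since $\Gamma_0$ kills $D(\mathrm{T}_0)$ and is onto $G$, its restriction is onto, and it is injective since $\textup{ker}(T^*-\lambda)\cap D(\mathrm{T}_0)=0$; this also recovers $\mathrm{M}_\lambda=\{(\varphi,M(\lambda)\varphi)\}$, making transversality to $L_0$ transparent. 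The converse is where the main obstacle lies. Injectivity gives $\textup{ker}(\mathrm{T}_0-\lambda)=0$, and the real work is surjectivity of $\mathrm{T}_0-\lambda$. I would first show $T^*-\lambda$ is onto $H$: its range is dense because $\textup{Ran}(T^*-\lambda)^\bot=\textup{ker}(T-\bar{\lambda})=0$ by regularity, and closed by the closed-range theorem since $\textup{Ran}((T^*-\lambda)^*)=\textup{Ran}(T-\bar{\lambda})$ is closed, again by regularity. Then for $f\in H$ I pick $u$ with $(T^*-\lambda)u=f$ and correct it to $u_0:=u-\gamma(\lambda)\Gamma_0 u\in D(\mathrm{T}_0)$ using $\gamma(\lambda)=(\Gamma_0|_{\textup{ker}(T^*-\lambda)})^{-1}$, giving $(\mathrm{T}_0-\lambda)u_0=f$; a bijective closed operator has bounded inverse, so $\lambda\in\rho(\mathrm{T}_0)$. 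The two steps I expect to be most delicate are the vanishing of $\textup{ker}(T^*\mp\mathfrak{i})\cap\textup{Ran}(T-\lambda)$ in the first part and the surjectivity of $T^*-\lambda$ in the converse; note in particular that $\mathrm{T}_0$ is not yet known to have compact resolvent here, so discreteness of $\sigma(\mathrm{T}_0)$ is unavailable and the closed-range argument is essential. The remaining manipulations are routine bookkeeping with the two indefinite inner products.
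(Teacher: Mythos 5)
Your proposal is correct, and the two halves should be judged separately. The second assertion (transversality $\Leftrightarrow$ $\lambda\in\rho(\mathrm{T}_0)$) is proved essentially as in the paper: the forward direction via the decomposition $D(T^*)=D(\mathrm{T}_0)\oplus\textup{ker}(T^*-\lambda)$, and the converse via surjectivity of $T^*-\lambda$ (density from $\textup{ker}(T-\lambda)=0$, closed range from regularity and the Closed Range Theorem) followed by the correction $u\mapsto u-\gamma(\lambda)\Gamma_0u$, which is the paper's $x_0-x_2$ written with the explicit inverse of $\Gamma_0|_{\textup{ker}(T^*-\lambda)}$. The first assertion is where you genuinely diverge: the paper proves maximality of the isotropic subspace $\mathrm{M}_\lambda$ directly, by showing that any $z$ whose class pairs trivially with $\mathrm{M}_\lambda$ satisfies $(T^*-\lambda)z\in\textup{Ran}(T-\lambda)$ and hence already lies in $\mathrm{M}_\lambda$ modulo $D(T)$; you instead pass to the operator side, prove that the Krein-type extension $S_\lambda=T^*|_{D(T)\dotplus\textup{ker}(T^*-\lambda)}$ is self-adjoint by computing $\textup{ker}(S_\lambda^*\mp\mathfrak{i})=\textup{ker}(T^*\mp\mathfrak{i})\cap\textup{Ran}(T-\lambda)=0$ (your imaginary-part computation here is correct), and then invoke the Lagrangian/self-adjoint correspondence. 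Both routes rest on the same input, namely Prop.~\ref{l2} giving $\textup{Ran}(T-\lambda)$ closed and equal to $[\textup{ker}(T^*-\lambda)]^\bot$; yours has the virtue of also establishing the paper's subsequent Remark (self-adjointness of $T_\lambda$) as a byproduct, at the cost of being longer and of needing one point you leave implicit: for ``deficiency indices $(0,0)$'' to yield self-adjointness of $S_\lambda$ itself rather than of its closure (and for $\mathrm{W}_\lambda$ to be a closed subspace of $\mathrm{B}_T$), you must know that $D(T)\dotplus\textup{ker}(T^*-\lambda)$ is graph-norm closed. This does hold for a regular-type point, and in fact the identification of $D(S_\lambda^*)$ that you already carry out shows $D(S_\lambda^*)\subseteq D(T)+\textup{ker}(T^*-\lambda)$ outright --- if $(T^*-\lambda)y=(T-\lambda)z_0$ then $y-z_0\in\textup{ker}(T^*-\lambda)$ --- which gives $S_\lambda=S_\lambda^*$ with no deficiency computation at all; that shortcut is exactly the paper's maximality argument in operator-theoretic clothing.
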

\begin{proof}The result was actually proved in \cite{wang2024complex}. For the reader's convenience we present a self-contained proof here.

$\mathrm{M}_\lambda$ is isotropic in $\mathbb{G}$. Indeed, if $(\varphi_0, \varphi_1), (\psi_0, \psi_1)\in \mathrm{M}_\lambda\subseteq \mathbb{G}$, then by definition there are $x,y\in \textup{ker}(T^*-\lambda)$ such that $(\Gamma_0x, \Gamma_1x)=(\varphi_0, \varphi_1)$ and $(\Gamma_0y, \Gamma_1y)=(\psi_0, \psi_1)$. According to the abstract Green's formula, we have
\begin{eqnarray*}(\varphi_1, \psi_0)_G-(\varphi_0, \psi_1)_G&=&(\Gamma_1x,\Gamma_0y)_G-(\Gamma_0x,\Gamma_1y)_G=(T^*x,y)_H-(x,T^*y)_H\\
&=&(\lambda x,y)_H-(x,\lambda y)_H=0.\end{eqnarray*}

If $\mathrm{M}_\lambda\subseteq W$ where $W$ is an isotropic subspace of $\mathbb{G}$, then for any $(\varphi_0, \varphi_1)\in W$, by definition there is a $z\in D(T^*)$ such that $(\Gamma_0 z, \Gamma_1z)=(\varphi_0, \varphi_1)$ and for any $x\in \textup{ker}(T^*-\lambda)$ we have
\begin{eqnarray*}0&=&(\Gamma_1x, \Gamma_0z)_G-(\Gamma_0x, \Gamma_1z)_G=(T^*x,z)_H-(x,T^*z)_H\\
&=&\lambda(x,z)_H-(x,T^*z)_H=-(x, (T^*-\lambda)z)_H,\end{eqnarray*}
implying that $(T^*-\lambda)z\in [\textup{ker}(T^*-\lambda)]^\bot=\textup{Ran}(T-\lambda)$. Thus there is a $z_0\in D(T)$ such that $(T^*-\lambda)z=(T-\lambda)z_0$. Consequently, $z-z_0\in \textup{ker}(T^*-\lambda)$ and $(\varphi_0, \varphi_1)=(\Gamma_0(z-z_0),\Gamma_1(z-z_0))\in \mathrm{M}_\lambda$. This shows $W=\mathrm{M}_\lambda$.

If $\lambda\in \rho(\mathrm{T}_0)$, then $D(T^*)=D(\mathrm{T}_0)\oplus \textup{ker}(T^*-\lambda)$. Indeed we only have to prove the inclusion $D(T^*)\subseteq D(\mathrm{T}_0)\oplus \textup{ker}(T^*-\lambda)$. For any $x\in D(T^*)$, $(\mathrm{T}_0-\lambda)^{-1}(T^*-\lambda)x\in D(\mathrm{T}_0)$. On the other side
\[(T^*-\lambda)(x-(T_0-\lambda)^{-1}(T^*-\lambda)x)=(T^*-\lambda)x-(T^*-\lambda)(T_0-\lambda)^{-1}(T^*-\lambda)x=0,\]
implying that $x-(T_0-\lambda)^{-1}(T^*-\lambda)x\in \textup{ker}(T^*-\lambda)$. Applying $(\Gamma_0, \Gamma_1)$ to both sides of the identity $D(T^*)=D(\mathrm{T}_0)\oplus \textup{ker}(T^*-\lambda)$, we get $\mathbb{G}=L_0+\mathrm{M}_\lambda$. If $0\neq (\varphi_0,\varphi_1)\in L_0\cap \mathrm{M}_\lambda$, then there is an $x\in \textup{ker}(T^*-\lambda)$ such that $(\Gamma_0x,\Gamma_1x)=(\varphi_0,\varphi_1)=(0,\varphi_1)$. This shows $x\in D(\mathrm{T}_0)$ and thus $\lambda$ is an eigenvalue of $\mathrm{T}_0$. This is a contradiction.

Conversely, if $L_0$ and $\mathrm{M}_\lambda$ are transversal, to see $\lambda\in \rho(\mathrm{T}_0)$, we have to prove that for any given $y\in H$, there is a unique $x\in D(\mathrm{T}_0)$ such that $(\mathrm{T}_0-\lambda)x=y$. Since $T$ is simple, $\textup{ker}(T-\lambda)=0$, and thus $\overline{\textup{Ran}(T^*-\lambda)}=[\textup{ker}(T-\lambda)]^\bot=H$. On the other side, since $T$ is regular, $T-\lambda$ has closed range. By the famous Closed Range Theorem, $\textup{Ran}(T^*-\lambda)$ is closed as well. Therefore $\textup{Ran}(T^*-\lambda)=H$ and we can choose $x_0\in D(T^*)$ such that $(T^*-\lambda)x_0=y$. Since $L_0$ and $\mathrm{M}_\lambda$ are transversal in $\mathbb{G}$, we can find $x_1\in D(\mathrm{T}_0)$ and $x_2\in \textup{ker}(T^*-\lambda)$ such that
\[(\Gamma_0x_0,\Gamma_1x_0)=(\Gamma_0x_1,\Gamma_1x_1)+(\Gamma_0x_2,\Gamma_1x_2).\]
We should have $\Gamma_0x_1=0$ and thus $\Gamma_0x_0=\Gamma_0x_2$. As a result $x_0-x_2\in D(\mathrm{T}_0)$. Now
\[(\mathrm{T}_0-\lambda)(x_0-x_2)=(T^*-\lambda)(x_0-x_2)=(T^*-\lambda)x_0=y.\]
That's to say $x_0-x_2$ is our required $x$. As before, $L_0\cap \mathrm{M}_\lambda=0$ means $\lambda$ cannot be an eigenvalue of $\mathrm{T}_0$, from which the uniqueness of $x$ follows.
\end{proof}
\emph{Remark}. Since $\mathrm{M}_\lambda$ is Lagrangian for $\lambda\in \mathbb{R}$, there is a self-adjoint extension $T_\lambda$ of $T$ corresponding to $\mathrm{M}_\lambda$. It is easy to see
\[D(T_\lambda)=D(T)+\textup{ker}(T^*-\lambda).\] $T_\lambda$ is called an extension of Krein type in \cite[\S\S~5.4]{behrndt2020boundary} where $T$ is assumed to be a semi-bounded symmetric relation.
\begin{lemma}\label{reg}For each $\lambda\in \mathbb{R}$, there is a self-adjoint extension $\mathrm{T}_0$ of $T$ such that $\lambda\in \rho(\mathrm{T}_0)$.
\end{lemma}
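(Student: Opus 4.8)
The plan is to derive the statement from the transversality criterion of Proposition \ref{trans}. Fix $\lambda\in\mathbb{R}$. Since self-adjoint extensions of $T$ correspond bijectively to Lagrangians of $\mathrm{B}_T$, and since any such extension can be realized as $T^*|_{\Gamma_0^{-1}(0)}$ for a suitable boundary triplet (see \cite{schmudgen2012unbounded, behrndt2020boundary}), it suffices to produce a Lagrangian $\hat L\subseteq\mathrm{B}_T$ that is transversal to $\mathrm{W}_\lambda$. Indeed, if $\mathrm{T}_0$ denotes the self-adjoint extension with $D(\mathrm{T}_0)/D(T)=\hat L$ and $(G,\Gamma_0,\Gamma_1)$ is a boundary triplet adapted to it, then under the isometry $(\Gamma_0,\Gamma_1)$ the subspace $\hat L$ becomes $L_0=0\oplus G$ while $\mathrm{W}_\lambda$ becomes $\mathrm{M}_\lambda$; transversality is preserved by an isometry, so $L_0$ and $\mathrm{M}_\lambda$ are transversal and Proposition \ref{trans} yields $\lambda\in\rho(\mathrm{T}_0)$.

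Next I would record the precise nature of $\mathrm{W}_\lambda$. The second paragraph of the proof of Proposition \ref{trans} in fact shows more than maximal isotropy: every vector that is $\{\cdot,\cdot\}$-orthogonal to $\mathrm{W}_\lambda$ already lies in $\mathrm{W}_\lambda$, so $\mathrm{W}_\lambda$ coincides with its orthogonal complement with respect to $\{\cdot,\cdot\}$. As the indefinite inner product on $\mathrm{B}_T$ has signature $(\infty,\infty)$, fixing a fundamental decomposition $\mathrm{B}_T=\mathcal{B}_+\oplus_\bot\mathcal{B}_-$ into its positive and negative definite parts (both infinite-dimensional) allows one to write every such self-orthogonal Lagrangian as the graph of a unitary operator $V\colon\mathcal{B}_+\to\mathcal{B}_-$ with respect to the Hilbert inner products $\pm\{\cdot,\cdot\}$; thus $\mathrm{W}_\lambda=\{x+Vx: x\in\mathcal{B}_+\}$.

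The key step is then immediate: set $\hat L:=\{x-Vx: x\in\mathcal{B}_+\}$, the graph of the unitary $-V$. This $\hat L$ is again a Lagrangian (its orthogonal complement being itself, since $-V$ is unitary), and it is transversal to $\mathrm{W}_\lambda$. For $\hat L\cap\mathrm{W}_\lambda=0$, from $x+Vx=y-Vy$ one reads off $x=y$ and $V(x+y)=0$, whence $x=y=0$. For $\hat L+\mathrm{W}_\lambda=\mathrm{B}_T$, given $z_+ + z_-\in\mathcal{B}_+\oplus_\bot\mathcal{B}_-$ the equations $x+y=z_+$ and $V(x-y)=z_-$ are solved by $x-y=V^{-1}z_-$, which is possible precisely because $V$ is invertible. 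Conceptually, $\hat L$ and $\mathrm{W}_\lambda$ are transversal because their angular operators $-V$ and $V$ differ by the invertible operator $2V$.

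I expect the main obstacle to be the geometric input of the second paragraph: one must be sure that $\mathrm{W}_\lambda$ is not merely maximal isotropic but genuinely self-orthogonal, so that it admits the graph-of-unitary normal form and a transversal Lagrangian can be exhibited explicitly. Once this Krein-space structure is available---together with the fact, guaranteed by the signature $(\infty,\infty)$, that $\mathcal{B}_+$ and $\mathcal{B}_-$ are unitarily isomorphic---the construction of $\hat L$ and the passage back to $\mathrm{T}_0$ through an adapted boundary triplet are routine.
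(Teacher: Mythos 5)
Your proof is correct and follows essentially the same route as the paper: the paper identifies $\mathrm{B}_T$ with the von Neumann decomposition $\textup{ker}(T^*-\mathfrak{i})\oplus\textup{ker}(T^*+\mathfrak{i})$ (exactly your fundamental decomposition), writes $\mathrm{W}_\lambda$ as the graph of a unitary $U$, takes the graph of $-U$ as the transversal Lagrangian, realizes it as $T^*|_{\Gamma_0^{-1}(0)}$ for an adapted boundary triplet, and concludes via Proposition \ref{trans}. Your explicit verification of the transversality of the graphs of $V$ and $-V$ simply fills in what the paper leaves as ``it can be checked easily.''
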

\begin{proof}Due to the von Neumann decomposition, we can identify $\mathrm{B}_T$ with $\textup{ker}(T^*-\mathfrak{i})\oplus \textup{ker}(T^*+\mathfrak{i})$ and self-adjoint extensions correspond to unitary maps from $\textup{ker}(T^*-\mathfrak{i})$ to $\textup{ker}(T^*+\mathfrak{i})$. If $U$ is the unitary map corresponding to $T_\lambda$, i.e., the Lagrangian in $\mathrm{B}_T$ associated to $T_\lambda$ is the graph of $U$, then it can be checked easily that the graph of $-U$ is another Lagrangian transverse to the graph of $U$. Let $\mathrm{T}_0$ be the self-adjoint extension associated to $-U$. Then by \cite[Thm.~2.5.9]{behrndt2020boundary}, we can find a boundary triplet $(G, \Gamma_0, \Gamma_1)$ such that $\mathrm{T}_0=T^*|_{\Gamma_0^{-1}(0)}$ and $T_\lambda=T^*|_{\Gamma_1^{-1}(0)}$. Since transversality is topologically invariant, we know from Prop.~\ref{trans} that $\lambda\in \rho(\mathrm{T}_0)$.
\end{proof}

Let $W_1, W_2$ be two closed subspaces of infinite dimension and codimension in a Hilbert space $H$ and $P_1, P_2$ the corresponding orthogonal projections onto them. $W_2$ is called a compact perturbation of $W_1$ if $P_2-P_1$ is a compact operator. In this case, $(W_2,W_1^\bot)$ is a Fredholm pair of subspaces in $H$ in the sense that both $\textup{dim}(W_2\cap W_1^\bot)$ and $\textup{codim}(W_2+W_1^\bot)$ are finite. The number
\[\textup{dim}(W_2,W_1):=\textup{dim}(W_2\cap W_1^\bot)-\textup{codim}(W_2+W_1^\bot)\]
is called the \emph{relative dimension} of $W_2$ w.r.t. $W_1$ \cite[\S~3]{abbondandolo2009infinite}. If a given closed subspace $W\subseteq H$ is of infinite dimension and codimension, we use $Gr_c(W, H)$ to denote the Grassmannian of compact perturbations of $W$ in $H$. $Gr_c(W, H)$ is a complex Banach manifold and has many topological components $Gr_{c,d}(W,H)$, each of which is indexed by the relative dimension $d$ w.r.t. $W$. Let $\mathbb{GL}_c(H)$ be the group of invertible elements in $\mathbb{B}(H)$ which are of the form $Id+K$ for a compact operator $K$.
\begin{lemma}\label{topo}$\mathbb{GL}_c(H)$ acts naturally on $Gr_c(W,H)$ and preserves each $Gr_{c,d}(W,H)$. In particular, the $\mathbb{GL}_c(H)$-action on $Gr_{c,d}(W,H)$ is transitive.
\end{lemma}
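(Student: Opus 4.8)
The plan is to let $g\in\mathbb{GL}_c(H)$ act by $g\cdot V:=g(V)$ and to prove the three assertions in turn; I write $P_V$ for the orthogonal projection onto a closed subspace $V$. The one fact I will use repeatedly is that when two closed subspaces $V,W$ differ by a compact perturbation the restricted projection $P_W|_V\colon V\to W$ is Fredholm, and a rewriting of the definition of relative dimension gives
\[\textup{dim}(V,W)=\textup{index}\big(P_W|_V\colon V\to W\big)\]
(its kernel is $V\cap W^\bot$ and its cokernel is $W\cap V^\bot$).

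\emph{Well-definedness.} The substantive point is that $P_{g(V)}-P_W$ is compact, and since compact perturbation is an equivalence relation it is enough to prove $P_{g(V)}-P_V$ is compact for every closed $V$. The subtlety is that $P_{g(V)}\neq gP_Vg^{-1}$ for non-unitary $g$, so I would use the oblique idempotent $Q:=gP_Vg^{-1}$, which has range $M:=g(V)$ and obeys $P_MQ=Q$, $QP_M=P_M$. Writing $g=Id+K$, $g^{-1}=Id+K'$ with $K,K'$ compact makes $Q-P_V$ compact; substituting $Q=P_V+(\text{compact})$ into the two identities shows $P_{M^\bot}P_V$ and $P_{V^\bot}P_M$ (hence, by taking an adjoint, $P_MP_{V^\bot}$) are compact. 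The identity $P_M-P_V=P_MP_{V^\bot}-P_{M^\bot}P_V$ then gives compactness of $P_{g(V)}-P_V$, so $g(V)\in Gr_c(W,H)$; the action axioms are clear.

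\emph{Preservation of each $Gr_{c,d}$.} I must show $\textup{dim}(g(V),W)=\textup{dim}(V,W)$, i.e.\ $\textup{index}(P_W|_{g(V)})=\textup{index}(P_W|_V)$. Since $g|_V\colon V\to g(V)$ is invertible (index $0$) and $P_W|_{g(V)}\circ g|_V=P_W|_V+P_WK|_V$ with the last summand compact, multiplicativity of the index under composition and its invariance under compact perturbation give the equality at once. This argument needs neither connectedness of $\mathbb{GL}_c(H)$ nor continuity of the action.

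\emph{Transitivity, the main obstacle.} Given $V_1,V_2\in Gr_{c,d}(W,H)$, the natural candidate $g_0:=P_{V_2}P_{V_1}+P_{V_2^\bot}P_{V_1^\bot}$ sends $V_1$ into $V_2$ and $V_1^\bot$ into $V_2^\bot$ and is a compact perturbation of $Id$, but it is generally not invertible; correcting this is the heart of the matter. I would consider the corner maps $a:=P_{V_2}|_{V_1}\colon V_1\to V_2$ and $b:=P_{V_2^\bot}|_{V_1^\bot}\colon V_1^\bot\to V_2^\bot$, each Fredholm because e.g.\ $a^*a=Id_{V_1}-(\text{compact})$ on $V_1$. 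Composing $a$ with $P_W|_{V_2}$ and arguing as in the previous paragraph gives $\textup{index}(P_W|_{V_2})+\textup{index}(a)=\textup{index}(P_W|_{V_1})$, whence $\textup{index}(a)=\textup{dim}(V_1,W)-\textup{dim}(V_2,W)=0$, and likewise $\textup{index}(b)=0$. It is exactly this vanishing, forced by $V_1$ and $V_2$ having the same relative dimension $d$, that permits finite-rank corrections $\tilde a=a+F$, $\tilde b=b+F'$ that are invertible. Then $g:=\tilde aP_{V_1}+\tilde bP_{V_1^\bot}$ differs from $g_0$ by a finite-rank operator, so $g\in\mathbb{GL}_c(H)$, and $g(V_1)=\tilde a(V_1)=V_2$. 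The crux, and the place where the equal-relative-dimension hypothesis is indispensable, is precisely this index-zero condition: with nonzero index no invertible compact perturbation carrying $V_1$ to $V_2$ can exist.
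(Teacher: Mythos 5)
Your proof is correct, and it is worth noting that the paper itself contains no argument for this lemma: its ``proof'' defers entirely to \cite[\S~3]{abbondandolo2009infinite} and to ``a slight modification of'' \cite[Lemma~15.10]{booss1993elliptic}, so you have supplied the details that the paper omits, and your argument is essentially the standard one underlying those citations. All three steps check out. The identification $\textup{dim}(V,W)=\textup{index}\bigl(P_W|_V\colon V\to W\bigr)$ is legitimate because $V+W^\bot$ is closed for a Fredholm pair, so $\textup{codim}(V+W^\bot)=\dim(W\cap V^\bot)$ really is the cokernel dimension; this bookkeeping device is what makes the rest of the proof run smoothly. In the well-definedness step, the oblique idempotent $Q=gP_Vg^{-1}$ together with the identity $P_M-P_V=P_MP_{V^\bot}-P_{M^\bot}P_V$ is exactly the right way to handle non-unitary $g$ (the naive guess $P_{g(V)}=gP_Vg^{-1}$ would be false). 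In the transitivity step, the computation $\textup{index}(a)=\textup{dim}(V_1,W)-\textup{dim}(V_2,W)=0$ (and $\textup{index}(b)=-\textup{index}(a)=0$) is precisely what licenses the finite-rank correction of $g_0=P_{V_2}P_{V_1}+P_{V_2^\bot}P_{V_1^\bot}$ to an invertible element of $\mathbb{GL}_c(H)$ carrying $V_1$ onto $V_2$; you correctly identify this index-zero condition as the crux and as the reason the action cannot mix components of different relative dimension. Two very minor points you may wish to make explicit: the compactness of $g_0-Id$ follows from $g_0-Id=-P_{V_2^\bot}(P_{V_1}-P_{V_2})P_{V_1}/\;$-type manipulations using $P_{V_1}-P_{V_2}$ compact, and Fredholmness of $a$ needs both $a^*a=Id_{V_1}+(\text{compact})$ and the symmetric statement $aa^*=Id_{V_2}+(\text{compact})$. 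Neither is a gap. No errors found.
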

\begin{proof}The claims can be found in \cite[\S~3]{abbondandolo2009infinite}. In particular, the transitivity of the group action can be proved by a slight modification of the proof of \cite[Lemma 15.10]{booss1993elliptic}.
\end{proof}
More information on $Gr_c(W, H)$ and the $\mathbb{GL}_c(H)$-action on it can be found in \cite{abbondandolo2009infinite}. The following lemma shows why the group action of $\mathbb{GL}_c(H)$ on $Gr_c(W, H)$ is relevant here.
\begin{lemma}If $(D(T), \|\cdot\|_g)$ is compactly embedded in $H$, then $T$ has a self-adjoint extension with compact resolvent if and only if $\textup{ker}(T^*+\mathfrak{i})$ is a compact perturbation of $\textup{ker}(T^*-\mathfrak{i})$ and the relative dimension is zero.
\end{lemma}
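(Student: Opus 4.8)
The plan is to reduce the statement to a compactness condition on the von Neumann parameter and then read off the geometry using Lemma \ref{topo}. Write $K_+=\textup{ker}(T^*-\mathfrak{i})$ and $K_-=\textup{ker}(T^*+\mathfrak{i})$, so that $K_+^\bot=\textup{Ran}(T+\mathfrak{i})$ and $K_-^\bot=\textup{Ran}(T-\mathfrak{i})$, and both $K_\pm$ have infinite dimension and infinite codimension. Recall from the proof of Lemma \ref{nece} that a self-adjoint extension $\tilde{T}$ with von Neumann parameter $V:K_+\to K_-$ satisfies $U_V=Id-2\mathfrak{i}(\tilde{T}+\mathfrak{i})^{-1}$, so $(\tilde{T}+\mathfrak{i})^{-1}=\tfrac{\mathfrak{i}}{2}(U_V-Id)$ and hence $\tilde{T}$ has compact resolvent if and only if $U_V-Id$ is compact. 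The first step is to localize the non-compactness of $U_V-Id$ on the deficiency spaces. Since $C(T)=Id-2\mathfrak{i}(T+\mathfrak{i})^{-1}$ on $\textup{Ran}(T+\mathfrak{i})$ and $(T+\mathfrak{i})^{-1}$ is a topological isomorphism onto $(D(T),\|\cdot\|_g)\hookrightarrow H$, the compact embedding hypothesis makes $C(T)-Id$ compact there. Decomposing $U_V=C(T)\oplus_\bot V$ against $H=\textup{Ran}(T+\mathfrak{i})\oplus_\bot K_+$, I would obtain $U_V-Id=(\text{compact})+(VP_+-P_+)$, where $P_+$ is the orthogonal projection onto $K_+$. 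Thus $\tilde{T}$ has compact resolvent if and only if $VP_+-P_+$ is compact.

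For the forward direction, suppose some $\tilde{T}$ has compact resolvent. Then $U_V$ is unitary with $U_V-Id$ compact, so $U_V\in\mathbb{GL}_c(H)$, and by construction $U_V(K_+)=V(K_+)=K_-$. Applying Lemma \ref{topo} to the element $U_V$ and the base point $K_+\in Gr_{c,0}(K_+,H)$, I conclude that $K_-=U_V(K_+)$ lies in the same component, i.e.\ $K_-$ is a compact perturbation of $K_+$ with relative dimension zero.

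For the converse, assume $K_-$ is a compact perturbation of $K_+$ with relative dimension zero; I must produce a unitary $V:K_+\to K_-$ with $VP_+-P_+$ compact. Consider $A:=P_-|_{K_+}:K_+\to K_-$, where $P_-$ is the projection onto $K_-$. Writing $J:K_+\hookrightarrow H$ for the inclusion, the compactness of $P_--P_+$ gives $A=J+(\text{compact})$, whence $A^*A=Id_{K_+}+(\text{compact})$ and $|A|=Id_{K_+}+(\text{compact})$; a short index computation identifies the Fredholm index of $A$ with the relative dimension, so the hypothesis makes $A$ Fredholm of index zero. Polar-decomposing $A=U_{\mathrm{pol}}|A|$ and correcting the partial isometry $U_{\mathrm{pol}}$ by a finite-rank map on the finite-dimensional, equidimensional kernel and cokernel yields a unitary $V:K_+\to K_-$; from $A=U_{\mathrm{pol}}|A|$ together with the above I get $U_{\mathrm{pol}}=J+(\text{compact})$ and hence $V=J+(\text{compact})$. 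Then $VP_+-P_+=(V-J)P_+$ is compact, the corresponding $U_V$ satisfies $U_V-Id$ compact, and the associated self-adjoint extension has compact resolvent.

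The routine part is the forward direction, which is essentially immediate once $U_V\in\mathbb{GL}_c(H)$ is recognized and Lemma \ref{topo} is invoked. The main obstacle is the converse: one must pass from the purely homotopical datum ``relative dimension zero'' to an actual unitary intertwiner $V$ between $K_+$ and $K_-$ that differs from the inclusion by a compact operator. The two technical points there — matching the relative dimension with the Fredholm index of $A=P_-|_{K_+}$, and checking that the finite-rank correction in the polar decomposition preserves the property ``$V-J$ compact'' — are where the care is needed.
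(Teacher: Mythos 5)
Your proposal is correct, and while the necessity direction coincides with the paper's (both recognize the Cayley transform $U_V$ as an element of $\mathbb{GL}_c(H)$ carrying $\textup{ker}(T^*-\mathfrak{i})$ onto $\textup{ker}(T^*+\mathfrak{i})$ and invoke the component-preserving part of Lemma \ref{topo}), your sufficiency direction takes a genuinely different route. The paper uses the \emph{transitivity} of the $\mathbb{GL}_c(H)$-action on $Gr_{c,0}(W_+,H)$ to produce $g\in\mathbb{GL}_c(H)$ with $gW_+=W_-$ and then extracts the required unitary from the polar decomposition of $gP_+$, using $\rho^2=P_++P_+KP_+$ to see that the positive part is invertible in $\mathbb{GL}_c(W_+)$. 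You bypass transitivity entirely: you take $A=P_-|_{K_+}$, observe that compactness of $P_--P_+$ makes $A$ a compact perturbation of the inclusion with $A^*A=Id+(\text{compact})$, identify $\textup{ind}(A)$ with the relative dimension (with the paper's conventions one actually gets $\textup{ind}(A)=-\dim(K_-,K_+)$, but only the vanishing matters), and correct the polar part of $A$ by a finite-rank unitary between the equidimensional kernel and cokernel. Your route is more self-contained, since it uses only the easy ``preserves each component'' half of Lemma \ref{topo} rather than the transitivity, which the paper quotes from external sources; the price is the (standard) identification of the relative dimension of a Fredholm pair with a Fredholm index, which you correctly flag as the point needing care. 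Your preliminary reduction --- that the extension associated to $V$ has compact resolvent if and only if $VP_+-P_+$ is compact --- is also a cleaner packaging of the computation the paper performs inline with bounded sequences, and it makes both implications transparent.
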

\begin{proof}For our convenience, we use $W_\pm$ to denote $\textup{ker}(T^*\mp \mathfrak{i})$ and let $P_\pm$ denote the corresponding orthogonal projections.

\emph{The necessity part}. If $\mathrm{T}_0$ is a self-adjoint extension of $T$ with compact resolvent, then the Cayley transform $C(\mathrm{T}_0)\in \mathbb{GL}_c(H)$ by definition. Clearly $C(\mathrm{T}_0)$ sends $W_+$ into $W_-$, implying due to Lemma \ref{topo} that $W_-$ lies in $Gr_{c,0}(W, H)$ for $W=W_+$.

\emph{The sufficiency part}. We can choose $g\in \mathbb{GL}_c(H)$ such that $g\cdot W_+=W_-$. Let $gP_+=V\rho$ be the polar decomposition of $gP_+$, where $\rho$ is nonnegative, $\textup{ker}\rho=\textup{ker}(gP_+)=W_+^\bot$ and $V$ is a partial isometry whose initial (resp. final) space is $\overline{\textup{Ran}(\rho)}=W_+$ (resp. $W_-$). Then $V|_{W_+}$ is an isometry from $W_+$ to $W_-$. Since $g\in \mathbb{GL}_c(H)$, so is $g^*$. Thus
\[\rho^2=P_+g^*gP_+=P_++P_+KP_+\]
where $K$ is self-adjoint and compact. Thus $\rho|_{W_+}$ is Fredholm as an operator on $W_+$. Since $\textup{ker}\rho=W_+^\bot$, we see that $\rho|_{W_+}$ is invertible and lies in $\mathbb{GL}_c(W_+)$. We claim that the inverse Cayley transform of the unitary operator $U:=C(T)\oplus_\bot V|_{W_+}$ is a self-adjoint extension of $T$ with compact resolvent, i.e., $U-Id$ is compact. Indeed, let $\{z_n\}$ be a bounded sequence in $H$ and $z_n=x_n+y_n$ according to the decomposition $H=W_+\oplus_\bot W_+^\bot$. Then by definition
\begin{eqnarray*}Uz_n-z_n&=&((T-\mathfrak{i})(T+\mathfrak{i})^{-1}y_n-y_n)+(Vx_n-x_n)\\
&=&-2\mathfrak{i}(T+\mathfrak{i})^{-1}y_n+V\rho (\rho|_{W_+})^{-1}x_n-x_n\\
&=&-2\mathfrak{i}(T+\mathfrak{i})^{-1}y_n+g(\rho|_{W_+})^{-1}x_n-x_n.
\end{eqnarray*}
Note that $T+\mathfrak{i}$ is a topological linear isomorphism between $(D(T), \|\cdot\|_g)$ and $\textup{Ran}(T+\mathfrak{i})$. Since $(D(T), \|\cdot\|_g))$ is compactly embedded in $H$, we see that $\{Uz_n-z_n\}$ has a convergent subsequence in $H$.
\end{proof}
\begin{lemma}If $(D(T), \|\cdot\|_g)$ is compactly embedded in $H$, then $\textup{ker}(T^*+\mathfrak{i})$ is a compact perturbation of $\textup{ker}(T^*-\mathfrak{i})$.
\end{lemma}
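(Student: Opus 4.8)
The plan is to prove the statement by showing directly that the difference of orthogonal projections is compact. Write $W_\pm := \textup{ker}(T^*\mp\mathfrak{i})$ and let $P_\pm$ be the orthogonal projection onto $W_\pm$, so that the claim ``$W_-$ is a compact perturbation of $W_+$'' amounts to compactness of $P_--P_+$; here both $W_\pm$ are of infinite dimension (the deficiency index $\infty$) and infinite codimension (since $\textup{Ran}(T\pm\mathfrak{i})\cong D(T)$ is dense, hence infinite-dimensional), so the notion applies. Put $Q_\pm := Id-P_\pm$, the orthogonal projections onto $(\textup{ker}(T^*\mp\mathfrak{i}))^\bot=\textup{Ran}(T\pm\mathfrak{i})$, these ranges being closed because $T$ is closed and $\pm\mathfrak{i}\notin\mathbb{R}$. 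Using the elementary identity $P_--P_+ = P_-Q_+ - Q_-P_+$, it suffices to prove that $P_-Q_+$ and $Q_-P_+$ are each compact.

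For the first term I would exploit the decomposition $(T+\mathfrak{i})x = (T-\mathfrak{i})x + 2\mathfrak{i}x$ for $x\in D(T)$. Given $u\in H$, set $x:=(T+\mathfrak{i})^{-1}Q_+u\in D(T)$, so that $Q_+u=(T+\mathfrak{i})x$. Since $(T-\mathfrak{i})x\in\textup{Ran}(T-\mathfrak{i})=W_-^\bot$, the projection $P_-$ annihilates it, leaving $P_-Q_+u = 2\mathfrak{i}P_-x = 2\mathfrak{i}P_-(T+\mathfrak{i})^{-1}Q_+u$; thus $P_-Q_+ = 2\mathfrak{i}P_-(T+\mathfrak{i})^{-1}Q_+$. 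The key point is that $(T+\mathfrak{i})^{-1}Q_+$ is compact: by the remarks preceding Lemma \ref{nece}, $(T+\mathfrak{i})^{-1}$ is a topological isomorphism from $\textup{Ran}(T+\mathfrak{i})$ onto $(D(T),\|\cdot\|_g)$, the latter embeds compactly into $H$ by hypothesis, and precomposing with the bounded projection $Q_+$ preserves compactness. Hence $P_-Q_+$ is compact.

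For the second term, rather than repeat the computation I would observe that $Q_-P_+ = (P_+Q_-)^*$ (both $P_+$ and $Q_-$ being self-adjoint), so its compactness follows from that of $P_+Q_-$. The latter is handled symmetrically: from $(T-\mathfrak{i})x = (T+\mathfrak{i})x - 2\mathfrak{i}x$ and $(T+\mathfrak{i})x\in\textup{Ran}(T+\mathfrak{i})=W_+^\bot$ one gets $P_+Q_- = -2\mathfrak{i}P_+(T-\mathfrak{i})^{-1}Q_-$, again compact because $(T-\mathfrak{i})^{-1}$ maps $\textup{Ran}(T-\mathfrak{i})$ topologically onto the compactly embedded $(D(T),\|\cdot\|_g)$. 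Combining the two terms yields that $P_--P_+$ is compact, which is the assertion.

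As for difficulty, there is no serious obstacle once the algebraic identity $(T\pm\mathfrak{i})x=(T\mp\mathfrak{i})x\pm 2\mathfrak{i}x$ is combined with the orthogonality $\textup{Ran}(T\mp\mathfrak{i})\perp W_\mp$: this converts each half of $P_--P_+$ into a projection times the resolvent $(T\pm\mathfrak{i})^{-1}$. The only points requiring care are that compactness of these resolvents on the relevant ranges is exactly the content of the compact-embedding hypothesis (the same mechanism already used in Lemma \ref{nece}), and that passing to adjoints preserves compactness, which lets me dispatch the second projection without duplicating the estimate.
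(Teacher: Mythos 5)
Your argument is correct and complete. It rests on the same mechanism as the paper's proof --- namely that the compact embedding of $(D(T),\|\cdot\|_g)$ into $H$ makes $(T\pm\mathfrak{i})^{-1}Q_\pm$ compact, where $Q_\pm$ projects onto $\textup{Ran}(T\pm\mathfrak{i})$ --- but the algebraic packaging is genuinely different. The paper extends the Cayley transform $C(T)$ to a partial isometry $U_0$ with $Id-P_+=U_0^*U_0$ and $Id-P_-=U_0U_0^*$, notes that the hypothesis is equivalent to compactness of $K_1:=U_0(Id-P_+)-(Id-P_+)$ (which is exactly $-2\mathfrak{i}(T+\mathfrak{i})^{-1}Q_+$ in your notation), and then verifies the identity $P_+-P_-=K_1U_0^*+K_1^*$, so the hypothesis is invoked only once and the second summand comes for free as an adjoint. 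You instead use the elementary projection identity $P_--P_+=P_-Q_+-Q_-P_+$ and compute each term separately as a projection times a compact resolvent, invoking the embedding twice (once for each of $(T\pm\mathfrak{i})^{-1}$). What the paper's route buys is economy and a formula that feeds directly into its surrounding Cayley-transform arguments; what yours buys is that it avoids introducing $U_0$ altogether and each compact factor is visible at a glance. Your preliminary checks (infinite dimension and codimension of $W_\pm$, closedness of $\textup{Ran}(T\pm\mathfrak{i})$, and the identity $Q_-P_+=(P_+Q_-)^*$) are all correct, so no gap remains.
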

\begin{proof}We continue to use the notation in the previous lemma. We extend $C(T)$ to a partial isometry $U_0$ with initial subspace $W_+^\bot=\textup{Ran}(T+\mathfrak{i})$ and final subspace $\textup{Ran}(T-\mathfrak{i})$. Then
\[Id-P_+=U_0^*U_0,\quad Id-P_-=U_0U_0^*.\]
It's easy to see the compactness of the embedding $D(T)\hookrightarrow H$ is equivalent to that $K_1:=U_0(Id-P_+)-(Id-P_+)$ is compact. Note that
\[K_1=U_0U_0^*U_0-(Id-P_+)=(Id-P_-)U_0-(Id-P_+)\]
and consequently $K_1^*=U_0^*(Id-P_-)-(Id-P_+)$. We see
\[K_1U_0^*=U_0U_0^*U_0U_0^*-U_0^*U_0U_0^*=Id-P_--U_0^*(Id-P_-)\]
and finally find that
\[P_+-P_-=(Id-P_-)-(Id-P_+)=K_1U_0^*+K_1^*.\]
The lemma then follows.
\end{proof}
Therefore, to prove Thm.~\ref{Th1}, we only have to prove that $\textup{dim}(W_-,W_+)=0$ when $D(T)\hookrightarrow H$ is compact. For each $\lambda\in \mathbb{C}$, denote $\textup{ker}(T^*-\lambda)$ by $\mathrm{K}_\lambda$. Note that for $\lambda\in \mathbb{C}_+$, $C_\lambda(T):=(T-\lambda)(T-\bar{\lambda})^{-1}$ is a unitary map from $\textup{Ran}(T-\bar{\lambda})$ to $\textup{Ran}(T-\lambda)$. Then along the same line, for each $\lambda\in \mathbb{C}_+$ we can prove that $\mathrm{K}_{\bar{\lambda}}$ is a compact perturbation of $\mathrm{K}_\lambda$. Define $n(\lambda):=\textup{dim}(\mathrm{K}_{\bar{\lambda}},\mathrm{K}_\lambda)$ for $\lambda\in \mathbb{C}_+$.
\begin{lemma}If $(D(T), \|\cdot\|_g)$ is compactly embedded in $H$ and $M(\lambda)$ is the Weyl function associated to a boundary triplet $(G, \Gamma_0, \Gamma_1)$, then $M'(\lambda)$ is Fredholm for each $\lambda\in \mathbb{C}_+$ and
$n(\lambda)=-\textup{ind}(M'(\lambda))$.
\end{lemma}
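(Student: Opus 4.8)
The plan is to reduce the whole statement to a computation about the compression of an orthogonal projection between the two deficiency spaces, and then to read off the index directly from the definition of relative dimension.

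First I would record the standard description of the $\gamma$-field: for $\lambda\in\rho(\mathrm{T}_0)$ the operator $\gamma(\lambda)$ is a topological linear isomorphism of $G$ onto the closed subspace $\mathrm{K}_\lambda=\textup{ker}(T^*-\lambda)$ of $H$. Indeed $\gamma(\lambda)\in\mathbb{B}(G,H)$; it is injective because $\gamma(\lambda)\varphi=0$ forces $\varphi=\Gamma_0\gamma(\lambda)\varphi=0$; and it is onto $\mathrm{K}_\lambda$ because every $x\in\mathrm{K}_\lambda$ satisfies $x=\gamma(\lambda)\Gamma_0x$ (uniqueness in the defining boundary value problem holds precisely because $\lambda\notin\sigma_p(\mathrm{T}_0)$). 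The Open Mapping Theorem upgrades this bounded bijection to a topological isomorphism. Since $\mathrm{T}_0$ is self-adjoint, $\mathbb{C}_+\cup\mathbb{C}_-\subseteq\rho(\mathrm{T}_0)$, so for $\lambda\in\mathbb{C}_+$ both $\gamma(\lambda)$ and $\gamma(\bar\lambda)$ are such isomorphisms, onto $\mathrm{K}_\lambda$ and $\mathrm{K}_{\bar\lambda}$ respectively.

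Next I would factor $M'(\lambda)=\gamma(\bar\lambda)^*\gamma(\lambda)$ through these spaces. Let $Q$ be the orthogonal projection of $H$ onto $\mathrm{K}_{\bar\lambda}$. As $\gamma(\bar\lambda)$ is injective with closed range $\mathrm{K}_{\bar\lambda}$, its adjoint satisfies $\textup{ker}\,\gamma(\bar\lambda)^*=\mathrm{K}_{\bar\lambda}^\bot$, is onto $G$, restricts to an isomorphism $\gamma(\bar\lambda)^*|_{\mathrm{K}_{\bar\lambda}}\colon\mathrm{K}_{\bar\lambda}\to G$, and obeys $\gamma(\bar\lambda)^*=(\gamma(\bar\lambda)^*|_{\mathrm{K}_{\bar\lambda}})\circ Q$. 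Composing with $\gamma(\lambda)\colon G\to\mathrm{K}_\lambda$ gives
\[M'(\lambda)=\bigl(\gamma(\bar\lambda)^*|_{\mathrm{K}_{\bar\lambda}}\bigr)\circ\bigl(Q|_{\mathrm{K}_\lambda}\bigr)\circ\gamma(\lambda),\]
with the two outer factors isomorphisms. Hence $M'(\lambda)$ is Fredholm if and only if the compression $Q|_{\mathrm{K}_\lambda}\colon\mathrm{K}_\lambda\to\mathrm{K}_{\bar\lambda}$ is Fredholm, and then $\textup{ind}\,M'(\lambda)=\textup{ind}\,(Q|_{\mathrm{K}_\lambda})$.

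The decisive step is to analyse this compression. One computes directly $\textup{ker}(Q|_{\mathrm{K}_\lambda})=\mathrm{K}_\lambda\cap\mathrm{K}_{\bar\lambda}^\bot$, and observes that a vector $y\in\mathrm{K}_{\bar\lambda}$ is orthogonal to $Q(\mathrm{K}_\lambda)$ iff $y\bot\mathrm{K}_\lambda$, so $\textup{coker}(Q|_{\mathrm{K}_\lambda})\cong\mathrm{K}_{\bar\lambda}\cap\mathrm{K}_\lambda^\bot$. It was already shown that $\mathrm{K}_{\bar\lambda}$ is a compact perturbation of $\mathrm{K}_\lambda$, so $(\mathrm{K}_{\bar\lambda},\mathrm{K}_\lambda^\bot)$ is a Fredholm pair: both intersections are finite-dimensional and $\mathrm{K}_{\bar\lambda}+\mathrm{K}_\lambda^\bot$ is closed, whence $Q|_{\mathrm{K}_\lambda}$ is Fredholm with
\[\textup{ind}\,(Q|_{\mathrm{K}_\lambda})=\textup{dim}(\mathrm{K}_\lambda\cap\mathrm{K}_{\bar\lambda}^\bot)-\textup{dim}(\mathrm{K}_{\bar\lambda}\cap\mathrm{K}_\lambda^\bot).\]
Taking orthogonal complements of the closed sum yields $(\mathrm{K}_{\bar\lambda}+\mathrm{K}_\lambda^\bot)^\bot=\mathrm{K}_\lambda\cap\mathrm{K}_{\bar\lambda}^\bot$, so $\textup{codim}(\mathrm{K}_{\bar\lambda}+\mathrm{K}_\lambda^\bot)=\textup{dim}(\mathrm{K}_\lambda\cap\mathrm{K}_{\bar\lambda}^\bot)$. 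Substituting into the definition of relative dimension gives
\[n(\lambda)=\textup{dim}(\mathrm{K}_{\bar\lambda},\mathrm{K}_\lambda)=\textup{dim}(\mathrm{K}_{\bar\lambda}\cap\mathrm{K}_\lambda^\bot)-\textup{dim}(\mathrm{K}_\lambda\cap\mathrm{K}_{\bar\lambda}^\bot)=-\textup{ind}\,(Q|_{\mathrm{K}_\lambda})=-\textup{ind}\,M'(\lambda),\]
as claimed. I expect the main obstacle to be the bookkeeping in the factorization together with the sign matching: cleanly reducing the analytic object $M'(\lambda)$ to the geometric compression $Q|_{\mathrm{K}_\lambda}$, and then reconciling its index with the asymmetric sign convention in the definition of relative dimension. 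The only genuine analytic inputs — that $\gamma(\lambda)$ is a topological isomorphism onto $\mathrm{K}_\lambda$, and that $\mathrm{K}_{\bar\lambda}$ is a compact perturbation of $\mathrm{K}_\lambda$ — are either standard or already established, the latter simultaneously furnishing the Fredholm property and the closedness of $\mathrm{K}_{\bar\lambda}+\mathrm{K}_\lambda^\bot$ needed for the complement computation.
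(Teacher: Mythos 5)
Your proposal is correct and follows essentially the same route as the paper: both rest on the identity $M'(\lambda)=\gamma(\bar{\lambda})^*\gamma(\lambda)$, identify the kernel and cokernel with $\mathrm{K}_\lambda\cap \mathrm{K}_{\bar{\lambda}}^\bot$ and $\mathrm{K}_{\bar{\lambda}}\cap \mathrm{K}_\lambda^\bot$, and invoke the compact-perturbation/Fredholm-pair fact to obtain closed range before matching signs with the relative dimension. Your explicit factorization through the compression $Q|_{\mathrm{K}_\lambda}$ is only a tidier repackaging of the paper's direct kernel computations.
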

\begin{proof}Let $\gamma(\lambda)$ be the $\gamma$-field associated to the given boundary triplet. Note that $\mathrm{K}_\lambda=\gamma(\lambda)G$ and $\mathrm{K}_{\bar{\lambda}}=\gamma(\bar{\lambda})G$. Then
\begin{eqnarray*}\dim(\mathrm{K}_{\bar{\lambda}}\cap \mathrm{K}_\lambda^\bot)&=&\textup{dim}\{\varphi\in G|(\gamma(\bar{\lambda})\varphi, \gamma(\lambda)\psi)_G=0,\ , \forall \psi\in G\}\\
&=&\dim\textup{ker}(\gamma(\lambda)^*\gamma(\bar{\lambda})).\end{eqnarray*}
Similarly,
\[\textup{codim}(\mathrm{K}_{\bar{\lambda}}+\mathrm{K}_\lambda^\bot)=\dim(\mathrm{K}_\lambda\cap \mathrm{K}_{\bar{\lambda}}^\bot)=\dim\textup{ker}(\gamma(\bar{\lambda})^*\gamma(\lambda)).\]
Note that due to Eq.~(\ref{equation}), $\gamma(\bar{\lambda})^*\gamma(\lambda)=M'(\lambda)$ and $\gamma(\lambda)^*\gamma(\bar{\lambda})=M'(\bar{\lambda})=(M'(\lambda))^*$. What remained is to show that $\gamma(\bar{\lambda})^*\gamma(\lambda)G=\gamma(\bar{\lambda})^*\mathrm{K}_\lambda$ is closed in $G$. Let $P_\lambda$ be the projection onto $\mathrm{K}_\lambda$. Since $\mathrm{K}_{\bar{\lambda}}$ is a compact perturbation of $\mathrm{K}_\lambda$, $(P_\lambda, P_{\bar{\lambda}})$ is a \emph{Fredholm pair of projections} or equivalently $P_{\bar{\lambda}}P_\lambda$ as a map from $\mathrm{K}_\lambda$ to $\mathrm{K}_{\bar{\lambda}}$ is necessarily Fredholm \cite[\S~5.3]{doll2023spectral}. Thus $P_{\bar{\lambda}}\mathrm{K}_\lambda$ is closed in $\mathrm{K}_{\bar{\lambda}}$. Note that $\gamma(\bar{\lambda})^*\mathrm{K}_\lambda=\gamma(\bar{\lambda})^*P_{\bar{\lambda}} \mathrm{K}_\lambda$ and $\gamma(\bar{\lambda})^*$ is a topological linear isomorphism from $\mathrm{K}_{\bar{\lambda}}$ to $G$. We see that $\gamma(\bar{\lambda})^*\mathrm{K}_\lambda$ is closed in $G$.
\end{proof}
\emph{Remark}. A byproduct of the proof is the general fact that $\dim\textup{ker}(M'(\lambda))$ for $\lambda\in \mathbb{C}_+$ is independent of the choice of boundary triplets.

To complete the proof of Thm.~\ref{Th1}, we can choose a boundary triplet such that $M(\lambda)$ is analytic at $0\in \mathbb{C}$. Indeed by the proof of Lemma \ref{reg}, we can further require that $0\in \rho(\mathrm{T}_0)$ and thus $\rho(\mathrm{T}_0)$ is connected. By \cite[Coro.~2.3.8]{behrndt2020boundary}, $M'(0)$ is self-adjoint and invertible. This of course implies that $M'(\lambda_0)$ is invertible as well for a $\lambda_0\in \mathbb{C}_+$ sufficiently close to 0. By topological invariance of $\textup{ind}(M'(\lambda))$ and the connectedness of $\mathbb{C}_+$, we see
\[\textup{dim}(W_-,W_+)=\textup{dim}(\mathrm{K}_{-\mathfrak{i}},\mathrm{K}_{\mathfrak{i}})=n(\mathfrak{i})=n(\lambda_0)=-\textup{ind}(M'(\lambda_0))=0.\]
\emph{Remark.} Though the existence of a self-adjoint extension with compact resolvent can be guaranteed by the compactness of the embedding $D(T)\hookrightarrow H$, it doesn't mean that under this condition a certain "natural" self-adjoint extension has compact resolvent. Recently M. Malamud has shown that if additionally $T$ is semi-bounded, the Friedrichs' extension $T_F$ does not necessarily have compact resolvent \cite{malamud2023birman}.
\section{Parameterization of all self-adjoint extensions with compact resolvent}\label{para}
Now we assume that $(D(T),\|\cdot\|_g)$ is compactly embedded in $H$. Our basic goal in this section is to find all self-adjoint extensions of $T$ with compact resolvent. W.l.g., we further assume that $T$ is simple and has deficiency indices $(\infty, \infty)$. We can fix a self-adjoint extension $\mathrm{T}_0$ with compact resolvent and choose a boundary triplet $(G, \Gamma_0, \Gamma_1)$
such that $\mathrm{T}_0=T^*|_{\Gamma_0^{-1}(0)}$.

We have to establish a variant of Krein type resolvent formula. To do this, we define maps $\Gamma_\pm: D(T^*)\rightarrow G$ via $\Gamma_\pm=\frac{1}{\sqrt{2}}(\Gamma_1\pm \mathfrak{i}\Gamma_0)$. Then the abstract Green's formula now reads
\[(T^*x,y)_H-(x,T^*y)_H=\mathfrak{i}(\Gamma_+x,\Gamma_+y)_G-\mathfrak{i}(\Gamma_-x,\Gamma_-y)_G.\]
In terms of $\Gamma_\pm$, self-adjoint extensions are parameterized precisely by unitary operators on $G$, i.e., for a unitary operator $U\in \mathbb{U}(G)$, the corresponding self-adjoint extension $T_U$ has
\[\{x\in D(T^*)|\Gamma_-x=U\Gamma_+x\}\]
as its domain. In particular, the unitary operator $Id$ corresponds to $\mathrm{T}_0$ and $-Id$ corresponds to $\mathrm{T}_1:=T^*|_{\Gamma_1^{-1}(0)}$. Associated to $\Gamma_\pm$ are the two (non-self-adjoint) extensions $\mathrm{T}_\pm:=T^*|_{\Gamma_\pm^{-1}(0)}$. It is not hard to see $\mathrm{T}_-=\mathrm{T}_+^*$.
\begin{lemma}\label{plus}$\mathbb{C}_\pm \subseteq \rho(\mathrm{T}_\pm)$.
\end{lemma}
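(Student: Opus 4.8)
The plan is to show that $\mathrm{T}_+$ is dissipative in the sense that $\Im(\mathrm{T}_+x,x)_H\le 0$ for all $x\in D(\mathrm{T}_+)$, and then to upgrade this to the resolvent statement using the relation $\mathrm{T}_-=\mathrm{T}_+^*$. First I would take $x\in D(\mathrm{T}_+)=\Gamma_+^{-1}(0)$ and apply the abstract Green's formula in its $\Gamma_\pm$-form with $y=x$. Since $\Gamma_+x=0$, the right-hand side collapses to $-\mathfrak{i}\|\Gamma_-x\|_G^2$, while the left-hand side equals $2\mathfrak{i}\,\Im(\mathrm{T}_+x,x)_H$. Comparing gives $\Im(\mathrm{T}_+x,x)_H=-\tfrac12\|\Gamma_-x\|_G^2\le 0$. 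The identical computation on $D(\mathrm{T}_-)=\Gamma_-^{-1}(0)$ yields the opposite sign, $\Im(\mathrm{T}_-x,x)_H=\tfrac12\|\Gamma_+x\|_G^2\ge 0$.

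From dissipativity I would derive a lower bound. For $\lambda\in\mathbb{C}_+$ and $x\in D(\mathrm{T}_+)$,
\[\Im\big((\mathrm{T}_+-\lambda)x,x\big)_H=\Im(\mathrm{T}_+x,x)_H-\Im\lambda\,\|x\|_H^2\le -\Im\lambda\,\|x\|_H^2,\]
so by Cauchy--Schwarz $\|(\mathrm{T}_+-\lambda)x\|_H\ge \Im\lambda\,\|x\|_H$. Hence $\mathrm{T}_+-\lambda$ is injective and bounded below, and therefore $\textup{Ran}(\mathrm{T}_+-\lambda)$ is closed in $H$. Here $\mathrm{T}_+$ is closed and densely defined, being the restriction of $T^*$ to the graph-closed subspace $\Gamma_+^{-1}(0)\supseteq D(T)$.

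The remaining point is surjectivity, and this is where the adjoint relation does the work. Since $\mathrm{T}_+-\lambda$ has closed range, $\textup{Ran}(\mathrm{T}_+-\lambda)^\bot=\textup{ker}\big((\mathrm{T}_+-\lambda)^*\big)=\textup{ker}(\mathrm{T}_+^*-\bar\lambda)=\textup{ker}(\mathrm{T}_--\bar\lambda)$, using $\mathrm{T}_-=\mathrm{T}_+^*$. For $\lambda\in\mathbb{C}_+$ we have $\bar\lambda\in\mathbb{C}_-$, and the sign-reversed lower bound for $\mathrm{T}_-$ (coming from $\Im(\mathrm{T}_-x,x)_H\ge 0$) shows $\mathrm{T}_--\bar\lambda$ is injective, so this kernel is $0$. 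Thus $\textup{Ran}(\mathrm{T}_+-\lambda)$ is simultaneously closed and dense, hence all of $H$, and the lower bound makes $(\mathrm{T}_+-\lambda)^{-1}$ bounded; therefore $\lambda\in\rho(\mathrm{T}_+)$, proving $\mathbb{C}_+\subseteq\rho(\mathrm{T}_+)$. The inclusion $\mathbb{C}_-\subseteq\rho(\mathrm{T}_-)$ follows by the symmetric argument, interchanging the roles of $\mathrm{T}_+$ and $\mathrm{T}_-$. I expect the only delicate step to be the surjectivity: dissipativity alone yields just injectivity with closed range, and one genuinely needs the identity $\mathrm{T}_-=\mathrm{T}_+^*$ together with injectivity of $\mathrm{T}_-$ on the conjugate half-plane to rule out a nonzero orthogonal complement.
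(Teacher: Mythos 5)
Your proposal is correct and follows essentially the same route as the paper: both derive the lower bound $\|(\mathrm{T}_\pm-\lambda)x\|\ge|\Im\lambda|\,\|x\|$ from the abstract Green's formula with $\Gamma_\pm x=0$ (the paper by expanding $\|(\mathrm{T}_+-\lambda)x\|^2$ directly, you via dissipativity and Cauchy--Schwarz), and both then exclude the residual spectrum using $\mathrm{T}_-=\mathrm{T}_+^*$ together with injectivity of $\mathrm{T}_--\bar\lambda$. Your version is if anything slightly more explicit about why $\mathrm{T}_+$ is closed and why the range is closed, but the substance is identical.
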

\begin{proof}This is a direct conclusion of \cite[Coro.~1.6.5]{behrndt2020boundary}, but for the convenience of the reader we provide an independent proof here.
Let $\lambda=a+\mathfrak{i}b\in \mathbb{C}_+$ where $a,b\in \mathbb{R}$. Then for each $x\in D(\mathrm{T}_+)$ we have
\begin{eqnarray*}\|(\mathrm{T}_+-\lambda)x\|^2&=&(\mathrm{T}_+x-ax-\mathfrak{i}bx, \mathrm{T}_+x-ax-\mathfrak{i}bx)_H\\
&=&\|(\mathrm{T}_+-a)x\|^2+b^2\|x\|^2+\mathfrak{i}b[(\mathrm{T}_+x,x)_H-(x,\mathrm{T}_+x)_H]\\
&=&\|(\mathrm{T}_+-a)x\|^2+b^2\|x\|^2+b\|\Gamma_-x\|^2,\end{eqnarray*}
where in the last line the abstract Green's formula is used. Thus $\|(\mathrm{T}_+-\lambda)x\|\geq b\|x\|$. This shows $\lambda\notin \sigma_p(\mathrm{T}_+)\cup \sigma_c(\mathrm{T}_+)$. Similarly, $\lambda\notin \sigma_p(\mathrm{T}_-)\cup \sigma_c(\mathrm{T}_-)$ for any $\lambda\in \mathbb{C}_-$. In fact $\lambda\in \mathbb{C}_+$ cannot lie in $\sigma_r(\mathrm{T}_+)$ too because otherwise $\bar{\lambda}$ ought to lie in $\sigma_p(\mathrm{T}_-)$. This shows $\mathbb{C}_+\subseteq \rho(\mathrm{T}_+)$. The claim for $\mathrm{T}_-$ follows similarly.
\end{proof}
\begin{lemma}Fix $\lambda\in \mathbb{C}_+$. For each $\varphi\in G$, there is a unique $x\in H$ such that \[\left\{
\begin{array}{ll}
T^*x=\lambda x,\\
\Gamma_+x=\varphi.
\end{array}
\right.\]
\end{lemma}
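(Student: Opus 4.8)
The plan is to separate uniqueness and existence: uniqueness is immediate from Lemma~\ref{plus}, while existence reduces to the invertibility of an operator built from the Weyl function.

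For uniqueness I would argue directly. If $x_1,x_2\in H$ both solve the system, then $x:=x_1-x_2$ satisfies $T^*x=\lambda x$ and $\Gamma_+x=0$, i.e. $x\in\ker(T^*-\lambda)\cap\Gamma_+^{-1}(0)=\ker(\mathrm{T}_+-\lambda)$. Since $\lambda\in\mathbb{C}_+\subseteq\rho(\mathrm{T}_+)$ by Lemma~\ref{plus}, this kernel is $0$, whence $x=0$.

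For existence I would use the $\gamma$-field. Because $\mathrm{T}_0$ is self-adjoint we have $\mathbb{C}_+\subseteq\rho(\mathrm{T}_0)$, so $\gamma(\lambda)$ and $M(\lambda)$ are defined at $\lambda$, and $\gamma(\lambda)$ is a bijection from $G$ onto $\ker(T^*-\lambda)$ with $\Gamma_0\gamma(\lambda)\psi=\psi$ and $\Gamma_1\gamma(\lambda)\psi=M(\lambda)\psi$. Hence for $x=\gamma(\lambda)\psi$,
\[\Gamma_+x=\tfrac{1}{\sqrt2}(\Gamma_1+\mathfrak{i}\Gamma_0)x=\tfrac{1}{\sqrt2}(M(\lambda)+\mathfrak{i})\psi,\]
so that $\Gamma_+|_{\ker(T^*-\lambda)}$ maps onto $G$ exactly when $M(\lambda)+\mathfrak{i}$ is surjective; the required solution would then be $x=\gamma(\lambda)\psi$ with $\psi=\sqrt2\,(M(\lambda)+\mathfrak{i})^{-1}\varphi$.

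The hard part is thus to show $M(\lambda)+\mathfrak{i}\,Id$ is invertible on $G$, and here I would invoke the Nevanlinna property. For $\lambda\in\mathbb{C}_+$ one has $\Im M(\lambda)>0$, so $\Im\bigl(M(\lambda)+\mathfrak{i}\bigr)=\Im M(\lambda)+Id\geq Id$. A standard estimate shows that any bounded $A$ with $\Im A\geq Id$ satisfies $\|A\psi\|\geq\|\psi\|$, giving injectivity with closed range; applying the same to $A^*=M(\bar\lambda)-\mathfrak{i}$, whose imaginary part is $\leq-Id$, shows $A^*$ is injective and hence $A$ has dense range, so $A$ is invertible. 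This delivers $\psi$ and completes the proof. I would remark that this argument needs neither the compactness of the resolvent of $\mathrm{T}_0$ nor the simplicity of $T$---only the positivity of $\Im M(\lambda)$ on $\mathbb{C}_+$.
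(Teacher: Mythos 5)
Your proof is correct, and the existence part takes a genuinely different route from the paper's. The paper argues more directly: since $\lambda\in\rho(\mathrm{T}_+)$ by Lemma~\ref{plus}, one has the decomposition $D(T^*)=D(\mathrm{T}_+)+\textup{ker}(T^*-\lambda)$ (by the same resolvent argument as in Prop.~\ref{trans}), and because $\Gamma_+$ vanishes on $D(\mathrm{T}_+)$ and $\Gamma_+D(T^*)=G$ (surjectivity of $(\Gamma_0,\Gamma_1)$), it follows at once that $\Gamma_+(\textup{ker}(T^*-\lambda))=G$. You instead parameterize $\textup{ker}(T^*-\lambda)$ by the $\gamma$-field of the original triplet, compute $\Gamma_+\gamma(\lambda)\psi=\tfrac{1}{\sqrt2}(M(\lambda)+\mathfrak{i})\psi$, and reduce everything to the invertibility of $M(\lambda)+\mathfrak{i}$, which you obtain from the Nevanlinna property via the standard estimate $\|A\psi\|\geq\|\psi\|$ for $\Im A\geq Id$ together with the adjoint relation $M(\lambda)^*=M(\bar\lambda)$. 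Both arguments are sound and neither needs simplicity of $T$ or compactness of the resolvent. The paper's route is shorter and bypasses the Weyl function entirely; yours has the side benefit of explicitly establishing that $M(\lambda)+\mathfrak{i}$ is boundedly invertible on $\mathbb{C}_+$, a fact the paper uses tacitly immediately after this lemma when it writes $B(\lambda)=(M(\lambda)-\mathfrak{i})(M(\lambda)+\mathfrak{i})^{-1}$ and concludes $\|B(\lambda)\|<1$. One presentational caveat: your argument leans on the $\gamma$-field being a well-defined bijection $G\to\textup{ker}(T^*-\lambda)$, which is the $\Gamma_0$-analogue of the very statement being proved; this is not circular, since that fact is established earlier for boundary triplets, but it is worth acknowledging that you are importing it rather than proving the lemma from scratch.
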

\begin{proof}The uniqueness follows immediately from Lemma \ref{plus}. To see the existence, we first note that $\Gamma_+ D(T^*)=G$. On the other side, as in the proof of Prop.~\ref{trans} we have
\[D(T^*)=D(\mathrm{T}_+)+\textup{ker}(T^*-\lambda).\] Thus we see that $\Gamma_+(D(T^*))=\Gamma_+(\textup{ker}(T^*-\lambda))$. The claim then follows.
\end{proof}
Thus we have a $\mathbb{B}(G, H)$-valued analytic function $\gamma_+(\lambda)$ such that $\gamma_+(\lambda)\varphi$ is the solution $x$ associated to $\varphi\in G$. $\gamma_+(\lambda)$ is actually a topological linear isomorphism between $G$ and $\textup{ker}(T^*-\lambda)$. Now we can also define a $\mathbb{B}(G)$-valued analytic function $B(\lambda)$ by $B(\lambda)\varphi:=\Gamma_-\gamma_+(\lambda)\varphi$. An easy computation shows $B(\lambda)=(M(\lambda)-\mathfrak{i})(M(\lambda)+\mathfrak{i})^{-1}$ for $\lambda\in \mathbb{C}_+$ and as a result $\|B(\lambda)\|<1$. In the same way, replacing $\mathrm{T}_+$ with $\mathrm{T}_-$, $\mathbb{C}_+$ with $\mathbb{C}_-$ and $\Gamma_+$ with $\Gamma_-$, we obtain a $\mathbb{B}(G, H)$-valued analytic function $\gamma_-(\lambda)$ on $\mathbb{C}_-$, which maps $G$ onto $\textup{ker}(T^*-\lambda)$.
\begin{lemma}\label{gama}For $\lambda\in \mathbb{C}_+$, $\gamma_-(\bar{\lambda})^*=-\mathfrak{i}\Gamma_-(\mathrm{T}_+-\lambda)^{-1}$.
\end{lemma}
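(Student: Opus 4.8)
The plan is to verify the claimed operator identity \emph{weakly}: pair both sides against an arbitrary $\varphi\in G$, test against an arbitrary $y\in H$, and then read off the equality of operators from the nondegeneracy of the inner product on $G$. Since $\lambda\in\mathbb{C}_+\subseteq\rho(\mathrm{T}_+)$ by Lemma \ref{plus}, the resolvent $(\mathrm{T}_+-\lambda)^{-1}$ is a well-defined bounded operator, and $\bar{\lambda}\in\mathbb{C}_-$, so $\gamma_-(\bar{\lambda})$ is defined; both sides of the asserted identity are thus genuine elements of $\mathbb{B}(H,G)$. Unwinding the definition of the adjoint, the identity $\gamma_-(\bar{\lambda})^*=-\mathfrak{i}\Gamma_-(\mathrm{T}_+-\lambda)^{-1}$ is equivalent to
\[(y,\gamma_-(\bar{\lambda})\varphi)_H=(-\mathfrak{i}\Gamma_-(\mathrm{T}_+-\lambda)^{-1}y,\varphi)_G\qquad\text{for all }y\in H,\ \varphi\in G.\]

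To establish this I would introduce the two test vectors $z:=(\mathrm{T}_+-\lambda)^{-1}y$ and $w:=\gamma_-(\bar{\lambda})\varphi$. By construction $z\in D(\mathrm{T}_+)$, so $\Gamma_+z=0$ and $(T^*-\lambda)z=y$; and $w\in\textup{ker}(T^*-\bar{\lambda})$, so $T^*w=\bar{\lambda}w$ and $\Gamma_-w=\varphi$. The key computation is to evaluate $(y,w)_H$ in two ways. On the one hand $(y,w)_H=((T^*-\lambda)z,w)_H=(T^*z,w)_H-\lambda(z,w)_H$; using $T^*w=\bar{\lambda}w$ together with the convention that $(\cdot,\cdot)_H$ is conjugate-linear in its second slot gives $(z,T^*w)_H=(z,\bar{\lambda}w)_H=\lambda(z,w)_H$, so the quantity above is exactly $(T^*z,w)_H-(z,T^*w)_H$.

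Applying the abstract Green's formula in its $\Gamma_\pm$ form to $z$ and $w$, this quantity equals $\mathfrak{i}(\Gamma_+z,\Gamma_+w)_G-\mathfrak{i}(\Gamma_-z,\Gamma_-w)_G$. Since $\Gamma_+z=0$ and $\Gamma_-w=\varphi$, the first term drops out, and we are left with $(y,w)_H=-\mathfrak{i}(\Gamma_-z,\varphi)_G=(-\mathfrak{i}\Gamma_-(\mathrm{T}_+-\lambda)^{-1}y,\varphi)_G$, which is precisely the displayed weak identity. As $y$ and $\varphi$ were arbitrary, the operator identity follows.

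I do not expect a serious obstacle here: the argument is a single application of Green's formula once the right test vectors are chosen, and all the analytic content—well-definedness of the resolvent and of $\gamma_-(\bar{\lambda})$—is already supplied by Lemma \ref{plus} and the construction of $\gamma_-$. The only points requiring care are bookkeeping ones: respecting that $(\cdot,\cdot)_H$ is conjugate-linear in the second argument (so that $\bar{\lambda}$ reverts to $\lambda$ in $(z,\bar{\lambda}w)_H$), and confirming that the boundary conditions are assigned to the correct vectors, i.e. that it is $z$ that lies in $D(\mathrm{T}_+)=\Gamma_+^{-1}(0)$ while $w$ carries the prescribed $\Gamma_-$ datum.
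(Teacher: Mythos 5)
Your proposal is correct and follows essentially the same route as the paper: both pair $(\mathrm{T}_+-\lambda)^{-1}y$ against $\gamma_-(\bar{\lambda})\varphi$, use that the latter is a $T^*$-eigenvector with eigenvalue $\bar{\lambda}$ to turn the pairing into the Green's-formula expression, and then kill the $\Gamma_+$ term via $\Gamma_+\!\left((\mathrm{T}_+-\lambda)^{-1}y\right)=0$ and read off $\Gamma_-\gamma_-(\bar{\lambda})\varphi=\varphi$. The bookkeeping points you flag (conjugate-linearity in the second slot, which vector carries which boundary condition) are exactly the ones that matter, and you handle them correctly.
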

\begin{proof}For any $x\in H$, let $y=(\mathrm{T}_+-\lambda)^{-1}x\in D(\mathrm{T}_+)$. Then for any $\varphi\in G$, we have
\begin{eqnarray*}(\gamma_-(\bar{\lambda})^*(\mathrm{T}_+-\lambda)y,\varphi)_G&=&((\mathrm{T}_+-\lambda)y, \gamma_-(\bar{\lambda})\varphi)_H\\&=&(\mathrm{T}_+y,\gamma_-(\bar{\lambda})\varphi)_H-(y,\bar{\lambda}\gamma_-(\bar{\lambda})\varphi)_H\\
&=&(T^*y,\gamma_-(\bar{\lambda})\varphi)_H-(y, T^*\gamma_-(\bar{\lambda})\varphi)_H\\
&=&\mathfrak{i}(\Gamma_+y, \Gamma_+\gamma_-(\bar{\lambda})\varphi)_G-\mathfrak{i}(\Gamma_-y, \Gamma_-\gamma_-(\bar{\lambda})\varphi)_G\\
&=&-\mathfrak{i}(\Gamma_-y, \varphi)_G.
\end{eqnarray*}
The formula then follows.
\end{proof}
\begin{lemma}\label{domain}For $\lambda\in \mathbb{C}_+$, in terms of $\mathrm{T}_+$, $\gamma_+(\lambda)$ and $\gamma_-(\bar{\lambda})$, the domain of the self-adjoint extension $T_U$ associated to the unitary operator $U\in \mathbb{U}(G)$ can be characterized in the following way:
\begin{eqnarray*}
D(T_U)&=&\{x=(\mathrm{T}_+-\lambda)^{-1}(y+w)+\gamma_+(\lambda)\varphi|\varphi\in G, w\in \textup{ker}(T^*-\bar{\lambda}),\\
&y&\in \textup{Ran}(T-\lambda), \mathfrak{i}\gamma_-(\bar{\lambda})^*w=(U-B(\lambda))\varphi\}.
\end{eqnarray*}
\end{lemma}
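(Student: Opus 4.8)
The plan is to diagonalize $D(T^*)$ with respect to the pair $(\mathrm{T}_+,\lambda)$ and then to translate the defining boundary condition $\Gamma_-x=U\Gamma_+x$ of $T_U$ into the stated constraint by means of Lemma~\ref{gama}. The argument is essentially bookkeeping once two decompositions are in place, and I expect no serious obstacle beyond keeping track of which component of a vector survives under $\gamma_-(\bar{\lambda})^*$.

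First I would record two preliminary splittings. Since $\lambda\in \mathbb{C}_+\subseteq \rho(\mathrm{T}_+)$ by Lemma~\ref{plus}, the same reasoning used in the proof of Prop.~\ref{trans} yields $D(T^*)=D(\mathrm{T}_+)+\textup{ker}(T^*-\lambda)$; and because $\gamma_+(\lambda)$ is an isomorphism from $G$ onto $\textup{ker}(T^*-\lambda)$ with $\Gamma_+\gamma_+(\lambda)\varphi=\varphi$, while $D(\mathrm{T}_+)=\Gamma_+^{-1}(0)$, the intersection $D(\mathrm{T}_+)\cap\textup{ker}(T^*-\lambda)$ is trivial. Hence the sum is direct, and every $x\in D(T^*)$ is \emph{uniquely} $x=u+\gamma_+(\lambda)\varphi$ with $u\in D(\mathrm{T}_+)$ and $\varphi\in G$. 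Second, since $(\textup{Ran}(T-\lambda))^\bot=\textup{ker}(T^*-\bar{\lambda})$ and $\textup{Ran}(T-\lambda)$ is closed, we have $H=\textup{Ran}(T-\lambda)\oplus_\bot \textup{ker}(T^*-\bar{\lambda})$; as $(\mathrm{T}_+-\lambda)^{-1}$ maps $H$ bijectively onto $D(\mathrm{T}_+)$, each $u\in D(\mathrm{T}_+)$ is $(\mathrm{T}_+-\lambda)^{-1}(y+w)$ with $y\in\textup{Ran}(T-\lambda)$ and $w\in\textup{ker}(T^*-\bar{\lambda})$ uniquely determined. Thus the right-hand side of the lemma is exactly the set of such $x=u+\gamma_+(\lambda)\varphi$ subject to a condition on $w$ and $\varphi$, and it remains to identify that condition with membership in $D(T_U)$.

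Next I would compute the boundary data of $x=u+\gamma_+(\lambda)\varphi$. From $\Gamma_+u=0$ and $\Gamma_+\gamma_+(\lambda)\varphi=\varphi$ we get $\Gamma_+x=\varphi$, while $\Gamma_-\gamma_+(\lambda)\varphi=B(\lambda)\varphi$ gives $\Gamma_-x=\Gamma_-u+B(\lambda)\varphi$. Therefore the defining condition $\Gamma_-x=U\Gamma_+x$ of $T_U$ reads $\Gamma_-u=(U-B(\lambda))\varphi$, and everything reduces to evaluating $\Gamma_-u$. This is where Lemma~\ref{gama} enters: with $u=(\mathrm{T}_+-\lambda)^{-1}(y+w)$ it gives $\Gamma_-u=\Gamma_-(\mathrm{T}_+-\lambda)^{-1}(y+w)=\mathfrak{i}\,\gamma_-(\bar{\lambda})^*(y+w)$.

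Finally I would use the orthogonality to discard the $y$-component. Since $\gamma_-(\bar{\lambda})$ maps $G$ into $\textup{ker}(T^*-\bar{\lambda})$ and $y\in\textup{Ran}(T-\lambda)=(\textup{ker}(T^*-\bar{\lambda}))^\bot$, the adjoint annihilates $y$, i.e.\ $\gamma_-(\bar{\lambda})^*y=0$, leaving $\Gamma_-u=\mathfrak{i}\,\gamma_-(\bar{\lambda})^*w$. Substituting into $\Gamma_-u=(U-B(\lambda))\varphi$ yields precisely $\mathfrak{i}\,\gamma_-(\bar{\lambda})^*w=(U-B(\lambda))\varphi$, which is the stated constraint; running the equivalences backwards shows every $x$ in the displayed set lies in $D(T_U)$. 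The one point that deserves care is recognizing that this orthogonality is exactly what lets the constraint depend only on $w$ and $\varphi$ (and not on the full $u$), which is what makes the Krein-type parameterization clean.
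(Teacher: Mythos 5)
Your proposal is correct and follows essentially the same route as the paper: both use the decompositions $D(T^*)=D(\mathrm{T}_+)\oplus\textup{ker}(T^*-\lambda)$ and $H=\textup{Ran}(T-\lambda)\oplus_\bot\textup{ker}(T^*-\bar{\lambda})$ and then translate $\Gamma_-x=U\Gamma_+x$ via Lemma~\ref{gama}. The only cosmetic difference is that the paper kills the $y$-contribution by writing $y=(T-\lambda)v$ with $v\in D(T)$ so that $\Gamma_-v=0$, whereas you apply $\gamma_-(\bar{\lambda})^*$ to $y+w$ and use $\gamma_-(\bar{\lambda})^*y=0$ directly --- the same fact the paper itself invokes in Prop.~\ref{res}.
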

\begin{proof}Denote the RHS by $S$. Due to the decomposition $D(T^*)=D(\mathrm{T}_+)\oplus\textup{ker}(T^*-\lambda)$, if $x\in D(T_U)$, then $x=x_\lambda+z_\lambda$ where $x_\lambda\in D(\mathrm{T}_+)$ and $z_\lambda\in \textup{ker}(T^*-\lambda)$. Since $H=\textup{Ran}(T-\lambda)\oplus_\bot \textup{ker}(T^*-\bar{\lambda})$, there are $y\in \textup{Ran}(T-\lambda)$ and $w\in \textup{ker}(T^*-\bar{\lambda})$ such that $(\mathrm{T}_+-\lambda)x_\lambda=y+w$. Set $y=(T-\lambda)v$ for $v\in D(T)$. Then we have $(\mathrm{T}_+-\lambda)(x_\lambda-v)=w$, implying
$x_\lambda=v+(\mathrm{T}_+-\lambda)^{-1}w$. Thus
\[\Gamma_-x_\lambda=\Gamma_-(\mathrm{T}_+-\lambda)^{-1}w=\mathfrak{i}\gamma_-(\bar{\lambda})^*w\]
due to Lemma \ref{gama}. Besides, by definition we also have $\Gamma_-z_\lambda=B(\lambda)\Gamma_+z_\lambda$. Since $\Gamma_-x=U\Gamma_+x$, we obtain
$$\Gamma_-x_\lambda+\Gamma_-z_\lambda=U\Gamma_+z_\lambda$$
 where the fact $\Gamma_+x_\lambda=0$ is used. Combining these facts together and setting $\varphi=\Gamma_+z_\lambda$, we then have $D(T_U)\subseteq S$. The inclusion $S\subseteq D(T_U)$ can be checked directly.
\end{proof}
Note that if $x=(\mathrm{T}_+-\lambda)^{-1}(y+w)+\gamma_+(\lambda)\varphi\in D(T_U)$ as above, then
\[(T_U-\lambda)x=(T^*-\lambda)x=y+w.\]
\begin{proposition}\label{res}For $\lambda\in \mathbb{C}_+$, we have the formula
\begin{equation}(T_U-\lambda)^{-1}-(\mathrm{T}_+-\lambda)^{-1}=\mathfrak{i}\gamma_+(\lambda)(U-B(\lambda))^{-1}\gamma_-(\bar{\lambda})^*.\label{e2}\end{equation}
\end{proposition}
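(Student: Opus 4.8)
The plan is to convert the description of $D(T_U)$ from Lemma \ref{domain} directly into an inverse of $T_U-\lambda$. Since $T_U$ is self-adjoint and $\lambda\in \mathbb{C}_+$ is non-real, we have $\lambda\in \rho(T_U)$, so $(T_U-\lambda)^{-1}$ is a well-defined bounded operator; it therefore suffices to solve $(T_U-\lambda)x=h$ for an arbitrary $h\in H$ and read off the formula. First I would record that $U-B(\lambda)$ is invertible, so that the right-hand side of (\ref{e2}) is meaningful: since $\|B(\lambda)\|<1$ and $U$ is unitary, one writes $U-B(\lambda)=U(Id-U^{-1}B(\lambda))$ with $\|U^{-1}B(\lambda)\|<1$, and a Neumann series furnishes the inverse.

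Next, given $h\in H$, decompose it according to $H=\textup{Ran}(T-\lambda)\oplus_\bot \textup{ker}(T^*-\bar{\lambda})$ as $h=y+w$. Guided by Lemma \ref{domain} and the remark following it, I would set $\varphi:=(U-B(\lambda))^{-1}\mathfrak{i}\gamma_-(\bar{\lambda})^*w$, so that the compatibility condition $\mathfrak{i}\gamma_-(\bar{\lambda})^*w=(U-B(\lambda))\varphi$ holds by construction. Then $x:=(\mathrm{T}_+-\lambda)^{-1}(y+w)+\gamma_+(\lambda)\varphi$ lies in $D(T_U)$, and by the note after Lemma \ref{domain} it satisfies $(T_U-\lambda)x=(T^*-\lambda)x=y+w=h$. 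As $(\mathrm{T}_+-\lambda)^{-1}(y+w)=(\mathrm{T}_+-\lambda)^{-1}h$, this already produces $x=(\mathrm{T}_+-\lambda)^{-1}h+\mathfrak{i}\gamma_+(\lambda)(U-B(\lambda))^{-1}\gamma_-(\bar{\lambda})^*w$.

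The only remaining gap between this expression and (\ref{e2}) is that the correction term carries $\gamma_-(\bar{\lambda})^*w$ rather than $\gamma_-(\bar{\lambda})^*h$, and this is the step I expect to hold the actual content. I would show that $\gamma_-(\bar{\lambda})^*$ annihilates $\textup{Ran}(T-\lambda)$, whence $\gamma_-(\bar{\lambda})^*w=\gamma_-(\bar{\lambda})^*h$. For $v\in D(T)$ and any $\psi\in G$, since $\gamma_-(\bar{\lambda})\psi\in \textup{ker}(T^*-\bar{\lambda})$, we compute
\[((T-\lambda)v,\gamma_-(\bar{\lambda})\psi)_H=(v,(T^*-\bar{\lambda})\gamma_-(\bar{\lambda})\psi)_H=0,\]
so that $(T-\lambda)v\perp \textup{Ran}\gamma_-(\bar{\lambda})$, i.e.\ $\gamma_-(\bar{\lambda})^*(T-\lambda)v=0$. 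Substituting $h$ for $w$ in the displayed expression for $x$ then yields precisely (\ref{e2}), and the uniqueness of $x$ (from $\lambda\in \rho(T_U)$) confirms that this $x$ is $(T_U-\lambda)^{-1}h$. The main subtlety is thus the bookkeeping of the second paragraph together with the orthogonality computation above; the rest is a direct transcription of Lemma \ref{domain}.
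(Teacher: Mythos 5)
Your proposal is correct and follows essentially the same route as the paper: both arguments rest on the description of $D(T_U)$ in Lemma \ref{domain}, the identity $(T_U-\lambda)x=y+w$ noted after it, and the key observation that $\gamma_-(\bar{\lambda})^*$ vanishes on $\textup{Ran}(T-\lambda)$ (the paper invokes exactly this as ``$\gamma_-(\bar{\lambda})^*y=0$''). The only difference is one of direction --- you construct the preimage of a given $h$ and verify it lies in $D(T_U)$, while the paper decomposes $x=(T_U-\lambda)^{-1}z$ according to Lemma \ref{domain} --- plus your explicit Neumann-series check that $U-B(\lambda)$ is invertible, which the paper leaves implicit.
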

\begin{proof}For $z\in H$, set $x=(T_U-\lambda)^{-1}z\in D(T_U)$. Then due to Lemma \ref{domain},
\begin{eqnarray*}(T_U-\lambda)^{-1}z&=&x=(\mathrm{T}_+-\lambda)^{-1}(T_U-\lambda)x+\gamma_+(\lambda)\Gamma_+x\\
&=&(\mathrm{T}_+-\lambda)^{-1}z+\gamma_+(\lambda)\Gamma_+x.\end{eqnarray*}
Again due to Lemma \ref{domain},
\[\Gamma_+x=\mathfrak{i}(U-B(\lambda))^{-1}\gamma_-(\bar{\lambda})^*w=\mathfrak{i}(U-B(\lambda))^{-1}\gamma_-(\bar{\lambda})^*(y+w)\]
 where the fact $\gamma_-(\bar{\lambda})^*y=0$ is used. Since $y+w=(T_U-\lambda)x=z$, the resolvent formula thus follows.
\end{proof}
\textbf{The final proof of Thm.~\ref{thm2}}

\begin{proof}This is an easy consequence of Prop.~ \ref{res}. With the boundary triplet $(G, \Gamma_0, \Gamma_1)$, $\mathrm{T}_0$ is parameterized by the unitary operator $Id$. If $U$ is another unitary operator on $G$, then for $\lambda=\mathfrak{i}$
\begin{eqnarray*}(T_U-\mathfrak{i})^{-1}-(\mathrm{T}_0-\mathrm{i})^{-1}&=&\mathfrak{i}\gamma_+(\mathfrak{i})[(U-B(\mathfrak{i}))^{-1}-(Id-B(\mathfrak{i}))^{-1}]\gamma_-(-\mathfrak{i})^*\\
&=&\mathfrak{i}\gamma_+(\mathfrak{i})(U-B(\mathfrak{i}))^{-1}(Id-U)(Id-B(\mathfrak{i}))^{-1}\gamma_-(-\mathfrak{i})^*.
\end{eqnarray*}
Thus $(T_U-\mathfrak{i})^{-1}$ is compact if $U-Id$ is. Conversely, if $T_U$ has compact resolvent, then
\[\gamma_+(\mathfrak{i})^*\gamma_+(\mathfrak{i})(U-B(\mathfrak{i}))^{-1}(Id-U)(Id-B(\mathfrak{i}))^{-1}\gamma_-(-\mathfrak{i})^*\gamma_-(-\mathfrak{i})\]
is compact. Note that both $\gamma_+(\mathfrak{i})^*\gamma_+(\mathfrak{i})$ and $\gamma_-(-\mathfrak{i})^*\gamma_-(-\mathfrak{i})$ are invertible on $G$. We thus see $U-Id$ must be compact either.
\end{proof}

Thus self-adjoint extensions of $T$ with compact resolvent only occupy a small portion of all self-adjoint extensions. This is in sharp contrast with the case with finite deficiency indices $(n,n)$. One basic part of the study of symmetric operators with deficiency indices $(\infty, \infty)$ is to single out those "good" boundary conditions in a certain sense, e.g., \emph{regular} self-adjoint boundary conditions for the Laplacian $\triangle$ on a bounded smooth domain in $\mathbb{R}^m$. Self-adjoint extensions without compact resolvent seem to be "bad". However, such extensions are unavoidable in the following sense.
\begin{proposition}For any unitary operator $U$ close sufficiently to $-Id$, $T_U$ cannot have compact resolvent.
\end{proposition}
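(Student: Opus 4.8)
The plan is to deduce this directly from Theorem~\ref{thm2}. Recall that with the fixed boundary triplet $(G,\Gamma_0,\Gamma_1)$ of this section (for which $\mathrm{T}_0$ has compact resolvent), Theorem~\ref{thm2} says that $T_U$ has compact resolvent if and only if $U-Id$ is compact. Hence it suffices to show that $U-Id$ is \emph{not} compact once $U$ is sufficiently close to $-Id$ in the operator norm.

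First I would write $U-Id=(U+Id)-2\,Id$, which turns the proximity hypothesis ``$U$ near $-Id$'' into ``$\|U+Id\|$ small'' and reduces the question to estimating the distance from $U-Id$ to the compact operators. Let $\mathbb{K}(G)$ denote the norm-closed ideal of compact operators on $G$. Since $T$ has deficiency indices $(\infty,\infty)$, the space $G$ is infinite-dimensional, so $2\,Id\notin\mathbb{K}(G)$; more precisely, $\textup{dist}(-2\,Id,\mathbb{K}(G))$ equals the essential norm of $-2\,Id$, which is $2$. Therefore, whenever $\|U+Id\|<2$, the triangle inequality gives
\[
\textup{dist}(U-Id,\mathbb{K}(G))\ \geq\ \textup{dist}(-2\,Id,\mathbb{K}(G))-\|U+Id\|\ =\ 2-\|U+Id\|\ >\ 0,
\]
so $U-Id$ cannot be compact. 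By Theorem~\ref{thm2}, $T_U$ does not have compact resolvent, and this holds for every unitary $U$ in the open ball $\{\,U\in\mathbb{U}(G):\|U+Id\|<2\,\}$ about $-Id$.

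There is no genuine obstacle here: the whole argument rests on Theorem~\ref{thm2} together with the elementary fact that a nonzero scalar multiple of the identity on an infinite-dimensional space has positive distance to the compact operators. The only points worth stressing are that the criterion ``$U-Id$ compact'' is relative to the chosen triplet, and that it is precisely the non-compactness of $2\,Id$ that forces an entire operator-norm neighborhood of $-Id$ to consist of unitaries $U$ with $U-Id$ non-compact, and hence with $T_U$ of non-compact resolvent.
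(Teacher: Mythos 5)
Your proof is correct, and it takes a genuinely different route from the paper's. The paper invokes Kuiper's theorem to write a unitary near $-Id$ as $U=-e^{\mathfrak{i}A}$ with $\|A\|<\ln 3$, shows via a norm estimate that $\tfrac{1}{2}(e^{\mathfrak{i}A}+Id)$ is invertible, and concludes that $U-Id$ is invertible and hence not compact on the infinite-dimensional space $G$. You instead bypass invertibility entirely and estimate the distance to the ideal of compact operators: writing $U-Id=(U+Id)-2\,Id$ and using that the essential norm of $2\,Id$ is $2$ on an infinite-dimensional space, the triangle inequality in the Calkin algebra gives $\textup{dist}(U-Id,\mathbb{K}(G))\geq 2-\|U+Id\|>0$ on the explicit ball $\|U+Id\|<2$. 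Both arguments reduce to Theorem~\ref{thm2} in the same way. Your version is more elementary (no Kuiper, no exponential map), and it yields an explicit and in fact larger neighborhood of $-Id$ than the paper's; what the paper's argument buys in exchange is the stronger conclusion that $U-Id$ is actually \emph{invertible} near $-Id$, not merely non-compact. One small point worth making explicit if you write this up: the identity $\textup{dist}(c\,Id,\mathbb{K}(G))=|c|$ follows by evaluating $c\,Id-K$ on an orthonormal sequence, along which $K$ tends to zero in norm.
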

\begin{proof}Due to the well-known Kuiper theorem \cite{kuiper1965homotopy}, the unitary group $\mathbb{U}(G)$ is a contractible Banach-Lie group and any $U\in \mathbb{U}(G)$ close enough to $Id$ is of the form $e^{\mathfrak{i}A}$ for some bounded self-adjoint operator $A$. Let $A\in \mathbb{B}(G)$ be self-adjoint such that $\|A\|<\ln 3$. Then
\[\|\frac{1}{2}(e^{\mathfrak{i}A}+Id)-Id\|=\frac{1}{2}\|e^{\mathfrak{i}A}-Id\|\leq \frac{1}{2}(e^{\|A\|}-1)< 1,\]
and consequently $\frac{1}{2}(e^{\mathfrak{i}A}+Id)$ is invertible. Set $U=-e^{\mathrm{i}A}$. Then $U-Id=-e^{\mathfrak{i}A}-Id$ is invertible and cannot be compact. The claim then follows from Thm.~\ref{thm2}.
\end{proof}
This surely means the self-adjoint extension $T^*|_{\Gamma_1^{-1}(0)}$ and those close to it can never have compact resolvent.

\end{document}